\documentclass[12pt]{article}

\usepackage[english]{babel}

\usepackage[a4paper,top=2cm,bottom=2.5cm,left=2cm,right=2cm,marginparwidth=2cm]{geometry}

\usepackage{amssymb,amsmath}
\usepackage{amsmath} 
\usepackage{amsthm}
\usepackage{mathrsfs}
\usepackage{amssymb}
\usepackage{siunitx}
\PassOptionsToPackage{hyphens}{url}\usepackage{hyperref}
\usepackage{comment}
\usepackage{cleveref}
\usepackage[utf8]{inputenc}
\usepackage[right]{lineno}
\usepackage{csquotes}
\usepackage{booktabs}
\usepackage{longtable}
\usepackage{adjustbox}
\usepackage{array}
\usepackage{url}
\usepackage{titlesec}
\usepackage{ulem}
\usepackage[numbers]{natbib}
\usepackage{authblk}
\usepackage{xcolor} 

\titleformat{\subsection}
  {\mdseries\itshape\large} 
  {\thesubsection}{1em}{} 

\theoremstyle{definition}
\newtheorem{theorem}{Theorem}[section]
\newtheorem{lemma}[theorem]{Lemma}

\newtheorem{proposition}[theorem]{Proposition}
\newtheorem{example}[theorem]{Example}
\newtheorem{remark}[theorem]{Remark}

\usepackage{enumitem}
\setlist[enumerate]{itemsep=0pt, topsep=0pt}

\usepackage[english]{babel}

\newcommand{\la}{\langle }
\newcommand{\ra}{\rangle}
\newcommand{\F}{\mathbb{F}}
\newcommand{\PG}{\mathrm{PG}}
\newcommand{\cC}{\mathcal{C}}
\newcommand{\rk}{\mathrm{rk}}
\newcommand{\Tr}{\mathrm{Tr}}
\newcommand{\vv}{\mathbf{v}}

\newcommand{\uu}{\mathbf{u}}

\newcommand{\xx}{\mathbf{x}}
\newcommand{\cc}{\mathbf{c}}
\newcommand{\wt}{\mathrm{wt}}

\newcommand{\mS}{\mathcal{S}}

\newcommand{\dimq}{\dim_{\mathbb{F}_q}}
\newcommand{\dimqm}{\dim_{\mathbb{F}_{q^m}}}

\newcommand{\qbinom}[2]{\genfrac{[}{]}{0pt}{}{#1}{#2}_q}
\author[1]{Giovanni Longobardi}
\author[2]{Rocco Trombetti}
\author[3]{Lei Xu}

\affil[1]{Department of Mathematics and Applications "Renato Caccioppoli", University of Naples Federico II,  Naples, Italy, 
giovanni.longobardi@unina.it}
\affil[2]{Department of Mathematics and Applications "Renato Caccioppoli", University of Naples Federico II,  Naples, Italy, 
rocco.trombetti@unina.it}
\affil[3]{School of Mathematics and Statistics,
Beijing Jiaotong University, Beijing,
China,  
xuxinlei@bjtu.edu.cn}

\title{Near-uniform $q$-matroids}

\begin{document}

\date{}
\maketitle

\begin{abstract}
We study the $q$-matroids associated with nondegenerate $\F_{q^m}$-linear near-MRD codes, which we call near-uniform $q$-matroids. Using the correspondence between rank-metric codes and their associated $q$-systems, we determine explicitly their rank functions and characterize their cyclic flats. We show that near-uniform $q$-matroids form a class of representable paving $q$-matroids. and prove an upper bound on the number of their cyclic flats, which is shown to be sharp for certain parameters. Finally, when $n>m$, exploiting the rank distribution of near-MRD codes, we obtain an exact count of the nontrivial cyclic flats of the associated $q$-matroids.
\medskip

\medskip

\noindent \textbf{Keywords:} $q$-matroids, rank-metric codes, near-MRD codes. 

\noindent \textbf{2020 Mathematics Subject Classification:} 94B05, 05B35, 94B27.
\end{abstract}

\section{Introduction}
The notion of a $q$-matroid, regarded as the $q$-analogue of a matroid, originates from Crapo’s PhD thesis~\cite{crapo64} and was later reintroduced by  Jurrius and Pellikaan~\cite{jp2018}.
In recent years, $q$-matroids have been studied from different perspectives, including their structural properties, connections with coding theory, and representability and direct sums; see, for instance,~\cite{alde2026,ajnz2024,bc2022, cj2024, gj22,g-lj2025, gj2020,gjhr20,shir2019}.

As in classical matroid theory, $q$-matroids admit several
cryptomorphic descriptions, including axiom systems in terms of rank
functions, independent spaces, bases, circuits, flats, spanning spaces,
closure operators, hyperplanes, and open spaces~\cite{bc2022}.
More recently, Alfarano and Byrne established a cryptomorphic
description in terms of cyclic flats~\cite{alfbyrne}, showing that a
$q$-matroid is uniquely determined by its cyclic flats together with
their ranks.

There is a strong connection between $q$-matroids and rank-metric codes~\cite{gjhr20,jp2018,shir2019}. Nondegenerate rank-metric codes also admit an equivalent geometric description in terms of $q$-systems; see~\cite{randrianarisoa2020,sv2020}. The representability of $q$-matroids was investigated in~\cite{g-lj2025}, where several examples of non-representable $q$-matroids were constructed. Similarly to the classical setting, not all $q$-matroids are representable~\cite{gj22}. It was shown that asymptotically almost all $q$-matroids are non-representable~\cite{deku2026}, further emphasizing the question of which $q$-matroids admit a representation. Beyond representability over extension fields, a notion of multilinear representability for $q$-matroids in terms of matrix rank-metric codes was introduced and investigated in~\cite{alde2026}.

Paving $q$-matroids are $q$-matroids of rank $k$ whose circuits have dimension at least $k$. Several constructions of paving $q$-matroids have been considered in~\cite{gj22}, including examples that are not representable. Among representable paving $q$-matroids, uniform $q$-matroids form a special class whose representability can be characterized in terms of maximum rank distance (MRD) codes~\cite{g-lj2025}.

In analogy with the Hamming metric setting, rank-metric codes that are as close as possible to being MRD were introduced in~\cite{dlc2018}. Their geometric properties were subsequently investigated in~\cite{mnt23}, where these codes were referred to as \textit{near-MRD} codes. Building on these results and the connection between rank-metric codes and $q$-matroids, we study the $q$-matroids associated with near-MRD codes, which we call \textit{near-uniform} $q$-matroids.

The main contributions of this paper are devoted to the structure of these $q$-matroids. We determine their rank function (see Theorem~\ref{th:near-matroid}) and characterize their cyclic flats. 

 In Theorem~\ref{thm:bound-Dk-near-MRD}, we further obtain an upper bound on the number of cyclic flats which is attained for certain parameters. For $n>m$, applying the formula for the wight distribution of near-MRD code (\cite[Corollary 28]{dela18}) gives  the exact number of cyclic flats of associated near-uniform $q$-matroids, see Theorem~\ref{th:sizebound}.

\medskip
\noindent \textbf{Outline of the paper}. The paper is organized as follows. In Section ~\ref{pre}, we provide the necessary preliminaries on $q$-matroids, rank-metric codes, and $q$-systems. In Section~\ref{sec:near-MRD codes and their $q$-matroids}, we recall the main properties of near-MRD codes and study the $q$-matroids associated with them. In particular, we determine their rank functions, characterize their cyclic flats, and derive an upper bound on their number. In Section~\ref{determining}, we focus on $q$-matroids arising from optimal near-MRD codes and, by using their rank distribution \cite[Corollary 28]{dela18}, we compute the exact number of cyclic flats of the relevant $q$-matroids for $n >m$. Finally, Section~\ref{sec:conclusions} concludes the paper and presents some open problems.

\section{Preliminaries}\label{pre}

\subsection{\texorpdfstring{$q$-matroids}{q-matroids} and representability}

Let $n \in \mathbb{N}$ and $E$ be an $\F_q$-vector space of dimension $n$.  A $q$-\textit{matroid} is a pair $\mathcal{M}=(E,\rho)$ where $\rho$ is a function from the lattice $\mathscr{L}(E)$ of the $\F_q$-vector subspaces of $E$ to $\mathbb{N}$ such that, for all $A,B \in \mathscr{L}(E)$, holds that:

\begin{enumerate}
\item[(1)] Boundedness: $0 \leq \rho(A) \leq \operatorname{dim} A$.
\item[(2)] Monotonicity: $A \leq B \Rightarrow \rho(A) \leq \rho(B)$.
\item[(3)] Submodularity: $\rho(A+B)+\rho(A \cap B) \leq \rho(A)+\rho(B)$.
\end{enumerate}

The map $\rho$ is called \textit{rank function} and the value $\rho(\mathcal{M}):=\rho(E)$ is the \textit{rank} of the $q$-matroid. The value $h(\mathcal{M})=\dim_{\F_q}(E)$ is known as the \textit{height} of $\mathcal{M}$. A $1$-dimensional subspace of $E$ is called a \textit{loop} if the rank function takes the value $0$ on it.  
A subspace $A \in \mathscr{L}(E)$ is \textit{independent} if $\rho(A) = \dim_{\mathbb{F}_q}(A)$ and \textit{dependent} otherwise.  
The inclusion–minimal dependent subspaces are called \textit{circuits}.  
A subspace $A \in \mathscr{L}(E)$ is said to be a \textit{flat} if $\rho(A + X) > \rho(A)$ for every $1$-dimensional subspace $X \leq E$ not contained in $A$, and it is called \textit{open} if it is a sum of circuits. A subspace $A \in \mathscr{L}(E)$ is called \textit{cyclic} if $\rho(A)=\rho(H)$ for any hyperplane $H$ of $A$. Finally, a subspace is said to be a \textit{cyclic flat} if it is open and flat, or equivalently, if it is cyclic and flat \cite[Proposition 3.14]{alfbyrne}.

If $\mathcal{M}$ is a $q$-matroid, we will denote the set of its cyclic flats  by $\mathscr{Z}(\mathcal{M})$.

\begin{lemma}[\cite{alfbyrne}, Lemma 3.28]\label{alfbyrne-lemma228}
    Let $\mathcal{M}=(E, \rho)$ be a $q$-matroid, then the following hold.
    
    \begin{enumerate}
       \item[$(i)$] $I \in \mathscr{L}(E)$ is independent if and only if for every cyclic flat $Z, \dimq (I \cap Z) \leq \rho(Z)$.
\item[$(ii)$] $C \in \mathscr{L}(E)$ is a circuit if and only if $C$ is an inclusion-minimal space such that there exists a cyclic flat $Z$ satisfying $C \leq Z$ and $\dimq(C)=\rho(Z)+1$.
    \end{enumerate}
\end{lemma}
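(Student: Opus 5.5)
The plan is to prove both parts through the lattice of cyclic flats, using the closure operator $\operatorname{cl}$ and the \emph{cyclic core} (the largest cyclic subspace, equivalently the sum of all circuits contained in a space). I will rely on the standard facts for $q$-matroids recorded in \cite{alfbyrne}. First, $\rho(\operatorname{cl}(A))=\rho(A)$, and $\operatorname{cl}(A)$ is a flat. Second, the closure of a cyclic space is cyclic, so the closure of a cyclic flat's subspace $C$ that is cyclic is again a cyclic flat. Third, every circuit is cyclic. Fourth, for a cyclic flat $Z$ and a subspace $A\le Z$ we have $\rho(A)\le\rho(Z)$ by monotonicity.

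For $(i)$, necessity is the easy direction. If $I$ is independent, then so is every subspace of $I$, in particular $I\cap Z$. Hence $\dimq(I\cap Z)=\rho(I\cap Z)\le \rho(Z)$.

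For sufficiency, suppose $I$ is dependent. Then $I$ contains a circuit $C$. Let $Z=\operatorname{cl}(C)$. Since $C$ is cyclic, $Z$ is a cyclic flat. Because $C$ is a circuit, every hyperplane of $C$ is independent, so $\rho(C)=\dimq C-1$. Consequently
\[
\dimq(I\cap Z)\ge \dimq C=\rho(C)+1=\rho(Z)+1>\rho(Z),
\]
which contradicts the hypothesis. The only point to check is that circuits are cyclic. This holds because removing a hyperplane $H$ of $C$ gives $\rho(H)=\dimq C-1=\rho(C)$.

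For $(ii)$, let $\mathcal{S}$ be the family of subspaces $C$ for which there is a cyclic flat $Z\ge C$ with $\dimq C=\rho(Z)+1$. The goal is to show that the circuits are exactly the inclusion-minimal members of $\mathcal{S}$, and this takes two steps.

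\textbf{Step 1: every member of $\mathcal{S}$ is dependent.} If $C\in\mathcal{S}$, then $\rho(C)\le\rho(Z)<\dimq C$.

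\textbf{Step 2: every circuit lies in $\mathcal{S}$.} The argument from $(i)$ shows that each circuit $C$ lies in $\mathcal{S}$, witnessed by $Z=\operatorname{cl}(C)$.

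With these two steps in hand, let $C$ be a circuit and suppose $C'\subsetneq C$ with $C'\in\mathcal{S}$. By Step 1, $C'$ is dependent, which contradicts the minimality of $C$. So $C$ is minimal in $\mathcal{S}$.

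Conversely, let $C$ be minimal in $\mathcal{S}$. By Step 1, $C$ is dependent, so it contains a circuit $C'$. By Step 2, $C'\in\mathcal{S}$, and minimality of $C$ forces $C'=C$. Hence $C$ is a circuit.

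The main obstacle is justifying that $\operatorname{cl}(C)$ is cyclic when $C$ is cyclic. In the $q$-setting this is not as immediate as in classical matroids, because hyperplanes of $\operatorname{cl}(C)$ need not contain $C$. I would first try to invoke the corresponding result of \cite{alfbyrne}. Failing that, I would argue directly. Take a hyperplane $H$ of $Z=\operatorname{cl}(C)$. If $C\le H$, then $\rho(H)=\rho(Z)$. Otherwise $H\cap C$ is a hyperplane of $C$, so $\rho(H\cap C)=\rho(C)$. Submodularity applied to $H$ and $C$ then gives
\[
\rho(H)\ge \rho(H+C)+\rho(H\cap C)-\rho(C)=\rho(Z),
\]
using $H+C=Z$.
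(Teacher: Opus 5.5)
Your proof is correct. Circuits are cyclic with $\rho(C)=\dimq C-1$. Your submodularity computation also goes through: when $C\not\le H$ you have $H+C=Z$, and $H\cap C$ is a hyperplane of $C$. So $\operatorname{cl}(C)$ is a cyclic flat of rank $\dimq C-1$ containing $C$, which is all that both parts require. In fact the computation shows that any $Z\ge C$ with $\rho(Z)=\rho(C)$ is cyclic whenever $C$ is. Your Steps 1--2 then settle $(ii)$ cleanly.

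The paper gives no proof of its own here; it quotes the lemma from \cite{alfbyrne}. A natural alternative, mirroring the classical theory of cyclic flats, is to first establish the rank formula
\[
\rho(A)=\min\bigl\{\rho(Z)+\dimq A-\dimq(A\cap Z) : Z\in\mathscr{Z}(\mathcal{M})\bigr\},
\]
whose minimum is attained at the closure of the cyclic core of $A$. From it $(i)$ is immediate, and $(ii)$ follows by your minimality argument.

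Your route avoids the cyclic core and this formula altogether. It uses only the rank axioms, Lemma~\ref{lesskkk}, basic closure properties and a single submodularity inequality, so it is shorter and self-contained. The formula-based route costs more work but yields the stronger fact that $\rho$ is determined by the cyclic flats and their ranks.
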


\begin{lemma}[\cite{gj2022}, Proposition 3.3]\label{lesskkk}
    Let $\mathcal{M}=(E, \rho)$ be a $q$-matroid. Then for all $V \in \mathscr{L}(E)$
$$
\rho(V)=\dimq V \Longrightarrow \rho(W)=\dimq  W \text { for all } W \leqslant V.$$
\end{lemma}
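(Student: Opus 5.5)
The plan is to derive the implication directly from the three rank axioms, using only submodularity together with boundedness. The idea is to peel off one dimension at a time: once I show that independence passes to every hyperplane of $V$, induction on the codimension of $W$ in $V$ gives the full statement.

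\emph{Step 1: the hyperplane case.} Let $H$ be a hyperplane of $V$ and pick a $1$-dimensional subspace $X\le V$ with $X\not\le H$, so that $H+X=V$ and $H\cap X=\{0\}$. Submodularity gives
\[
\rho(V)+\rho(\{0\})\le \rho(H)+\rho(X).
\]
By boundedness, $\rho(\{0\})=0$ and $\rho(X)\le 1$. Hence
\[
\rho(H)\ge \rho(V)-1=\dimq V-1=\dimq H.
\]
Boundedness also gives $\rho(H)\le \dimq H$, so $\rho(H)=\dimq H$.

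\emph{Step 2: induction.} For an arbitrary $W\le V$, choose a chain
\[
W=V_0< V_1<\cdots<V_t=V,
\]
where each $V_{i}$ is a hyperplane of $V_{i+1}$ and $t=\dimq V-\dimq W$. Applying Step 1 repeatedly from the top of the chain downward, every $V_i$ is independent, and in particular $\rho(W)=\dimq W$.

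The argument is routine. The only point needing care is the choice of a complement $X$ to $H$ inside $V$, so that the submodular inequality involves $V$ and $\{0\}$. Note that monotonicity is not needed at all.
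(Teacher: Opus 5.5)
Your proof is correct. Step 1 is a valid use of submodularity together with $\rho(\{\mathbf{0}\})=0$ and $\rho(X)\le 1$. The downward induction along a chain of hyperplanes then finishes the argument, and the degenerate cases $W=V$ or $V=\{\mathbf{0}\}$ are trivial.

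The paper gives no proof of this lemma; it only quotes it from \cite{gj2022}. So there is no in-paper argument to compare with, and yours is a sound self-contained replacement.

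The induction is not actually needed. Take any complement $K$ of $W$ in $V$, so that $W+K=V$ and $W\cap K=\{\mathbf{0}\}$. A single application of submodularity gives
\[
\dimq V=\rho(V)\le \rho(W)+\rho(K)-\rho(\{\mathbf{0}\})\le \rho(W)+\dimq K,
\]
hence $\rho(W)\ge \dimq V-\dimq K=\dimq W$, and equality follows from boundedness. Your Step 1 is exactly the case $\dimq K=1$ of this inequality, applied repeatedly. Both versions use only boundedness and submodularity, and, as you note, monotonicity plays no role.
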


In the remainder we will always assume that $E=\F_q^n$. Let $\mathcal{M}_1=\left(\mathbb{F}_q^n, \rho_1\right)$ and $\mathcal{M}_2=\left(\mathbb{F}_q^n, \rho_2\right)$ be $q$-matroids. We say that $\mathcal{M}_1$ and $\mathcal{M}_2$ are \textit{equivalent} if there exists an isomorphism $\varphi: \mathbb{F}_q^n \rightarrow \mathbb{F}_q^n$ such that $\rho_1(A)=\rho_2(\varphi(A))$ for all $A \in \mathscr{L}(\mathbb{F}_q^n)$. In this case, we write $\mathcal{M}_1 \cong \mathcal{M}_2$.

Now, let $\mathbf{G} \in \mathbb{F}_{q^m}^{k \times n}$ be a $k \times n$ matrix with entries over $\mathbb{F}_{q^m}$ and for every $U \in \mathscr{L}\left(\mathbb{F}_q^n\right)$ with $\dimq U = u$, let $A^U \in \F_q^{n \times u}$ be a matrix whose columns span $U$. In \cite[Lemma 4.2]{gj22}, it was proven that the map
$$\rho_{\mathbf{G}}: U \in \mathscr{L}\left(\mathbb{F}_q^n\right) \longrightarrow  \mathrm{rk}_{\mathbb{F}_{q^m}}(\mathbf{G} A^U) \in \mathbb{N}$$
is the rank function of a $q$-matroid. In the following, we denote this $q$-matroid by $\mathcal{M}_\mathbf{G}$ and will refer to it as  the $q$-matroid {\it represented} by $\mathbf{G}$. 

A $q$-matroid $\mathcal{M}$ with ground vector space $\mathbb{F}_q^n$ is called $\mathbb{F}_{q^m}$-{\it representable} if $\mathcal{M}\cong\mathcal{M}_\mathbf{G}$ for some full rank matrix $\mathbf{G} \in \mathbb{F}_{q^m}^{\rho(\mathcal{M})\times n}$. In such a case, we say that the $q$-matroid $\mathcal{M}_\mathbf{G}=(\F_q^n,\rho_{\mathbf{G}})$ is an $\F_{q^m}$-\textit{representation} of $\mathcal{M}$.

For an $\mathbb{F}_{q}$-subspace $V \subseteq \mathbb{F}_{q^m}^k$, we define its $\mathbb{F}_{q^m}$-\textit{rank} as the integer
\begin{equation}\label{notation}
\rho(V)=\operatorname{dim}_{\mathbb{F}_{q^m}}\left(\langle V\rangle_{\mathbb{F}_{q^m}}\right),
\end{equation}
that is, the $\mathbb{F}_{q^m}$-dimension of the $\mathbb{F}_{q^m}$-span of any $\mathbb{F}_{q}$-basis of $V$. 

\subsection{$q$-Matroids represented via rank-metric codes}\label{subsec:rm}

In this section, we discuss representability of $q$-matroids via rank-metric codes. Let $\mathbb{F}_q$ be the finite field with $q$ elements and let $m, n$ be positive integers. The vector space $\mathbb{F}_{q^m}^n$ can be endowed with the \textit{rank metric}, defined as 
$$
\mathrm{d}_{\mathrm{rk}}(\textbf{x}, \textbf{y})=\operatorname{dim}_{\mathbb{F}_q}\left\langle x_1-y_1, \ldots, x_n-y_n\right\rangle_{\mathbb{F}_q},
$$
for any $\textbf{x}=\left(x_1, \ldots, x_n\right)$ and $\textbf{y}=\left(y_1, \ldots, y_n\right) \in \mathbb{F}_{q^m}^n$.
An $[n,k]_{q^m / q}$ \textit{rank-metric linear code} $\mathcal{C}$ is a $k$-dimensional subspace of $\mathbb{F}_{q^m}^n$. The \textit{minimum rank distance} of $\mathcal{C}$, denoted by $d(\cC)=\mathrm{d}_{\mathrm{rk}}(\mathcal{C})$, is defined as
$$
\mathrm{d}_{\mathrm{rk}}(\mathcal{C}):=\min \left\{\mathrm{d}_{\mathrm{rk}}(\xx, {\bf y}) \mid \xx, {\bf y} \in \mathcal{C}, \xx \neq {\bf y}\right\}.
$$
If $\cC$ has minimum rank distance $d$, we also say that $\cC$ is an $[n,k,d]_{q^m/q}$ code.

The parameters $n, m, k,$ and $d$ of a  rank-metric code are related by the following inequality known
as Singleton bound (see e.g. \cite{delsarte}):
$$mk \leq \min \{m(n - d + 1), n(m - d + 1)\}.$$ 

Codes whose parameters meet the Singleton bound with equality are called {\it maximum rank distance} codes or {\it MRD} codes for short.

For a vector $\mathbf{v} \in \mathbb{F}_{q^m}^n$ and an ordered basis $\Gamma=\left\{\gamma_1, \ldots, \gamma_m\right\}$ of $\mathbb{F}_{q^m}$ seen as $m$-dimensional vector space over $\F_q$, let $\Gamma(\mathbf{v}) \in \mathbb{F}_q^{n \times m}$ be the matrix defined by
$$
v_i=\sum_{j=1}^m \Gamma(\mathbf{v})_{i j} \gamma_j .
$$

The \textit{support} of $\mathbf{v}$ is the column space of $\Gamma(\mathbf{v})$ for any basis $\Gamma$ and  we denote it by $\operatorname{supp}(\mathbf{v})$. The \textit{rank-weight} of $\mathbf{v} \in \mathbb{F}_{q^m}^n$ is defined as $\operatorname{rk}(\mathbf{v})=\operatorname{dim}_{\mathbb{F}_q}(\operatorname{supp}(\mathbf{v}))$. 
The \textit{rank distribution} of $\cC$ is the collection $(A_i(\cC))_{i\in\mathbb{N}}$, where
$A_i(\cC):= |\{\vv\in\cC: \mathrm{rk}(\vv)=i\}|$.

A \textit{generator matrix} for an $[n, k,d]_{q^m / q}$ code $\mathcal{C}$ is a full-rank matrix $\textbf{G}=(u_1| \cdots |u_n) \in \mathbb{F}_{q^m}^{k \times n}$ such that $\mathcal{C}=\operatorname{rowsp}(\mathbf{G})$. If the columns of one (and hence any) generator matrix of $\mathcal{C}$ are $\mathbb{F}_q$-linearly independent, then $\mathcal{C}$ is said to be \textit{nondegenerate}. Two $[n, k, d]_{q^m / q}$ rank-metric codes $\mathcal{C}$ and $\mathcal{C}^{\prime}$ are {\it equivalent} if and only if there exists $A \in \mathrm{GL}\left(n, \mathbb{F}_{q}\right)$ such that $\mathcal{C}^{\prime}=\mathcal{C} A=\{\mathbf{v} A: \textbf{v} \in \mathcal{C}\}$.

Let $\cC$ be an $[n,k]_{q^m/q}$ rank-metric code. The \textit{dual code}  of $\cC$ is defined as 
\begin{equation}
    \cC^{\perp}=\{\textbf{v} \in \F_{q^m}^n \colon \textbf{v} \cdot \textbf{c}= \textbf{0} \,\,\forall \cc \in \cC \}
\end{equation}
where $\cdot$ is the standard inner product of $\F_{q^m}^n$. The $\cC^\perp$ is an $[n,n-k]_{q^m/q}$ rank-metric code and a full-rank matrix $\textbf{H} \in \F_{q^m}^{(n-k) \times n}$  such that  $\cc \textbf{H}^t=\textbf{0}$ for any $\cc \in \cC$ is called \textit{parity-check matrix} of $\cC$. Note that a generator matrix $\textbf{G}$ of $\cC$ is a parity-check matrix for $\cC^\perp$.

\begin{theorem}[\cite{g85}, Theorem 1] \label{gabidulin} Let $\cC \subset \F_{q^m}^n$ be an $[n,k]_{q^m/q}$ rank-metric code with parity check matrix $\mathbf{H}$. Then $\cC$ has rank distance $d$ if and only if for any $(d-1) \times n$ matrix $Y \in \F_q^{(d-1) \times n}$ of rank $d-1$
$$
\rk_{\mathbb{F}_{q^m}}(Y \mathbf{H}^t)=d-1
$$
and there exists a matrix $Y_0 \in \F_q^{d \times n}$ of rank $d$ such that
$$
\mathrm{rk}_{\mathbb{F}_{q^m}}(Y_0 \mathbf{H}^t)<d.
$$
\end{theorem}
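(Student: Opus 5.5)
The plan is to use the standard fact that the minimum rank distance of $\cC$ equals the minimum rank weight of a nonzero codeword, and to translate rank weights into a statement about $\mathbf{H}$. For $\vv\in\F_{q^m}^n$, write $\vv = \xx Y$, where $Y\in\F_q^{r\times n}$ has rank $r=\rk(\vv)$ and its rows form an $\F_q$-basis of the row space of $\Gamma(\vv)^t$ (equivalently, of $\operatorname{supp}(\vv)$ viewed as row vectors), and $\xx\in\F_{q^m}^r$ has $\F_q$-linearly independent entries. Indeed, if $\Gamma(\vv)=Y^tB$ with $B\in\F_q^{r\times m}$ of rank $r$, then $v_i=\sum_j\Gamma(\vv)_{ij}\gamma_j$ gives $\vv=\xx Y$ with $x_\ell=\sum_j B_{\ell j}\gamma_j$, and these $x_\ell$ are $\F_q$-independent because $B$ has full rank. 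Conversely, any $\xx Y$ with $\F_q$-independent entries of $\xx\in\F_{q^m}^r$ and $Y$ of full rank $r$ has rank weight exactly $r$.

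The key equivalence is the following: $\cC$ contains a nonzero codeword of rank weight $\le r$ if and only if there is $Y\in\F_q^{r\times n}$ of rank $r$ with $\rk_{\F_{q^m}}(Y\mathbf{H}^t)<r$. For the forward direction, take $\cc=\xx Y\in\cC$ nonzero with $\rk(\cc)=s\le r$. Then $\xx(Y\mathbf{H}^t)=\cc\mathbf{H}^t=\mathbf 0$ with $\xx\ne \mathbf 0$, so the $s$ rows of $Y\mathbf{H}^t$ are $\F_{q^m}$-dependent. Extend $Y$ by $r-s$ further $\F_q$-rows to a rank-$r$ matrix $Y'$ (possible since $r\le n$); the rows of $Y'\mathbf{H}^t$ remain dependent, so its rank is $<r$. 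For the reverse direction, pick a nonzero $\xx\in\F_{q^m}^r$ with $\xx Y\mathbf{H}^t=\mathbf 0$, so $\cc=\xx Y\in\cC^{\perp\perp}=\cC$. This $\cc$ is nonzero because $Y$ has $\F_q$-independent rows and hence $\F_{q^m}$-independent rows (a rank of a matrix over $\F_q$ does not change under field extension). Its rank weight is at most $r$, since every entry lies in the $\F_q$-span of the $r$ entries of $\xx$, so the $\F_q$-span of the entries of $\cc$ has dimension $\le r$.

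Applying the equivalence with $r=d-1$ and $r=d$ gives the theorem. The distance is $d$ exactly when there is no nonzero codeword of weight $\le d-1$ but there is one of weight $\le d$. By the equivalence, this means every rank-$(d-1)$ matrix $Y$ has $\rk(Y\mathbf{H}^t)=d-1$ (it is $\le d-1$ trivially, and not $<d-1$), and some rank-$d$ matrix $Y_0$ has $\rk(Y_0\mathbf{H}^t)<d$.

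The main obstacle is the rank-weight bookkeeping in the first step. One has to check carefully that the factorization $\vv=\xx Y$ exists with $Y$ over $\F_q$ of full rank, and that $\F_q$-independence of the rows of $Y$ gives $\F_{q^m}$-independence, so that the constructed codeword is nonzero. The edge case $d-1=0$ is vacuous. The case $d=n+1$ cannot occur for $k\ge1$, so $Y_0$ always fits inside $\F_q^{d\times n}$.
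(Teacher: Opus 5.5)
Your proof is correct. The paper gives no proof of this statement and simply quotes it from Gabidulin's 1985 paper. Your route is essentially Gabidulin's original argument: write a codeword of rank weight $s$ as $\xx Y$, with $Y\in\F_q^{s\times n}$ of full row rank and $\xx$ having $\F_q$-independent entries, and use $\cC=\{\vv:\vv\mathbf{H}^t=\mathbf{0}\}$, which holds because $\mathbf{H}$ has full rank $n-k$.

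The two steps that need care are handled correctly:
\begin{itemize}
\item padding $Y$ to $r\le n$ rows in the forward direction;
\item $\F_q$-independence of the rows of $Y$ forcing $\xx Y\neq\mathbf{0}$ in the reverse direction.
\end{itemize}
The boundary cases $d=1$ and $d\le n$ are also handled correctly.
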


A $[n,k,d]_{q^m/q}$-system $\mathcal S$ of $\F_{q^m}^k$ is an $n$-dimensional $\F_q$-subspace of $\F_{q^m}^k$ such that $\langle \mathcal S\rangle_{\F_{q^m}} = \mathbb{F}^k_{q^m}$ and 
\[d=n - \text{max} \{ \dim_{\mathbb{F}_q}(\mathcal{S} \cap H)\,\,|\,H\,\text{ is an } \mathbb{F}_{q^m}\text{-hyperplane of } \mathbb{F}_{q^m}^k\}.\]

Two $[n,k,d]_{q^m/q}$-systems $\mathcal{S}_1$ and $\mathcal{S}_2$ are said to be {\it equivalent} if there exists an $\mathbb{F}_{q^m}$-isomorphism $\varphi: \F_{q^m}^k \longrightarrow \F_{q^m}^k$ such that $\varphi(\mathcal{S}_1) = \mathcal{S}_2$.

For each $\F_{q^m}$-subspace $V \subseteq \F_{q^m}^k$, we define the {\it weight} of $V$ in $\mathcal{S}$ as the integer $\mathrm{wt}_{\mathcal S}(V)=\dim_{\F_q}(\mathcal{S} \cap V)$. Also, let $\ell,r$ be nonnegative integers such that $\ell<k \leq n$. An $[n,k,d]_{q^m/q}$-system $\mathcal S$ is said to be $(\ell,r)$-\textit{evasive} if for every $\ell$-dimensional $\F_{q^m}$-subspace $W$ of $\F_{q^m}^k$ we have $\wt_{\mathcal S}(W)\leq r$ \cite{bcmT2021}. When $r=\ell$, an $(\ell,\ell)$-evasive $q$-system will be also called $\ell$-\textit{scattered} and if $\ell=1$, we will simply say that it is \textit{scattered}, \cite{cmpz2021}. 

It is easy to see that if $r<\ell,$ there is no $(\ell,r)$-evasive $\F_{q^m}^k$ system. Moreover, if $\mathcal S$ is an $(\ell,r)$-evasive $[n,k,d]_{q^m/q}$-system and $\ell>0$, then $\mathcal S$ is also $(\ell-1,r-1)$-evasive.

Let $\mathfrak U(n,k,d)_{q^m/q}$ denote the set made up of all equivalence classes of $[n,k,d]_{q^m/q}$-systems in $\F_{q^m}^k$, and let $\mathfrak C(n,k,d)_{q^m/q}$ denote the set of equivalence classes of nondegenerate $[n,k,d]_{q^m/q}$-codes. Assume that $[\mathcal C] \in \mathfrak C(n,k,d)_{q^m/q}$ and let $\mathbf{G}=(u_1| \cdots |u_n) \in \mathbb{F}_{q^m}^{k \times n}$, where $u_i \in \mathbb{F}_{q^m}^k$ for $i \in \{1,...,n\}$, such that $\mathcal{C}=\operatorname{rowsp}(\mathbf{G})$. In \cite{randrianarisoa2020} it was determined that the following two maps: 

$$\begin{array}{rccc}\Phi: & \mathfrak C(n,k,d)_{q^m/q} &\longrightarrow &\mathfrak U(n,k,d)_{q^m/q} \\
& [\operatorname{rowsp}(\mathbf{G})] & \longmapsto & [\langle u_1, \ldots, u_n\rangle_{\mathbb{F}_q}] \end{array} $$

$$\begin{array}{rccc}\Psi: & \mathfrak U(n,k,d)_{q^m/q} &\longrightarrow &\mathfrak C(n,k,d)_{q^m/q} \\
& [\langle u_1, \ldots, u_n\rangle_{\mathbb{F}_q}] & \longmapsto & [\operatorname{rowsp}(\mathbf{G})] \end{array}$$
are well defined, and moreover they are the inverses of each other (see also \cite{sv2020}). Hence, the maps $\Phi$ and $\Psi$ define a 1-to-1 correspondence between equivalence classes of $[n,k,d]_{q^m/q}$-codes and equivalence classes of $[n,k,d]_{q^m/q}$-systems. Moreover, in this correspondence that associates a $[n,k,d]_{q^m/q}$ code $\mathcal C$ with an $[n,k,d]_{q^m/q}$ system $\mathcal{S}$, codewords of $\mathcal C$ of rank weight $w$ correspond to hyperplanes $H$ of $\mathbb{F}_{q^m}^k$ with $\dim_{\mathbb{F}_q}(H\cap \mathcal{S})=n-w$.

For a nondegenerate $[n,k]_{q^m/q}$ code $\cC$ and any $\mS\in\Phi([\cC])$, the {\it$s$-th generalized rank weight} of $\cC$ is given by
\[d_s(\cC)=n-\max\{\dimq(\mS\cap W):W\leq\F_{q^m}^k,\ \dimqm(W)=k-s\},\] for $s=1,\ldots,k$. Clearly, $d(\cC)=d_1(\cC)$.

\medskip
In what follows, we will define $\mathcal{M}_{\mathcal{S}}=(\mathcal{S},\rho_{\mathcal{S}})$, as the $q$-matroid whose ground vector space is an $[n,k]_{q^m/q}$ system $\mathcal{S}$ of $\mathbb{F}_{q^m}^k$, and $\rho_{\mathcal{S}}$ is defined as in \eqref{notation}. In this regard, we recall the following result, which will be useful throughout the remainder of this section.

\begin{proposition}[\cite{ajnz2024},  Theorem 1.1]\label{prop:equiv}
    Let $\textbf{G}$ be the generator matrix of a nondegenerate $[n,k]_{q^m/q}$ code, and let $\mathcal{S}$ be any $[n,k]_{q^m/q}$ system associated to it. Then $\mathcal{M}_{\textbf{G}}\cong \mathcal{M}_{\mathcal{S}}.$
\end{proposition}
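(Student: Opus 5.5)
The plan is to write down an explicit $\F_q$-linear isomorphism between the ground spaces $\F_q^n$ and $\mathcal{S}$, and then check that it carries $\rho_{\mathbf{G}}$ to $\rho_{\mathcal{S}}$. Since the ground spaces are different, the equivalence $\cong$ is understood as in the definition preceding the statement, but with $\varphi$ an $\F_q$-isomorphism $\F_q^n\to\mathcal{S}$.

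\textbf{Step 1: the isomorphism for the standard choice of $\mathcal{S}$.} Write $\mathbf{G}=(u_1|\cdots|u_n)$ and first take $\mathcal{S}=\langle u_1,\ldots,u_n\rangle_{\F_q}$. Define
\[
\varphi:\F_q^n\to\mathcal{S},\qquad \xx=(x_1,\ldots,x_n)^t\mapsto \mathbf{G}\xx=\sum_{i=1}^n x_iu_i .
\]
This map is $\F_q$-linear and surjective onto $\mathcal{S}$. Nondegeneracy says the $u_i$ are $\F_q$-linearly independent, so $\varphi$ is injective, hence an isomorphism.

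\textbf{Step 2: comparing the rank functions.} Let $U\in\mathscr{L}(\F_q^n)$ have dimension $u$, and let $A^U\in\F_q^{n\times u}$ have columns $a_1,\ldots,a_u$ spanning $U$. The columns of $\mathbf{G}A^U$ are $\varphi(a_1),\ldots,\varphi(a_u)$, which form an $\F_q$-spanning set of $\varphi(U)$. The $\F_{q^m}$-rank of a matrix equals the $\F_{q^m}$-dimension of its column span. That column span is $\langle \varphi(U)\rangle_{\F_{q^m}}$, because the $\F_{q^m}$-span of an $\F_q$-spanning set of $\varphi(U)$ equals the $\F_{q^m}$-span of $\varphi(U)$. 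Therefore
\[
\rho_{\mathbf{G}}(U)=\rk_{\F_{q^m}}(\mathbf{G}A^U)=\dimqm\langle\varphi(U)\rangle_{\F_{q^m}}=\rho_{\mathcal{S}}(\varphi(U)),
\]
which shows $\mathcal{M}_{\mathbf{G}}\cong\mathcal{M}_{\mathcal{S}}$ for this choice of $\mathcal{S}$.

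\textbf{Step 3: independence of the choices.} It remains to handle an arbitrary system $\mathcal{S}'$ in the class $\Phi([\cC])$. By the definition of equivalence of systems, $\mathcal{S}'=\psi(\mathcal{S})$ for some $\F_{q^m}$-isomorphism $\psi$ of $\F_{q^m}^k$. Because $\psi$ is $\F_{q^m}$-linear, $\langle\psi(V)\rangle_{\F_{q^m}}=\psi(\langle V\rangle_{\F_{q^m}})$, so $\dimqm\langle\psi(V)\rangle_{\F_{q^m}}=\dimqm\langle V\rangle_{\F_{q^m}}$ for every $V\leq\mathcal{S}$. Hence $\psi|_{\mathcal{S}}$ gives $\mathcal{M}_{\mathcal{S}}\cong\mathcal{M}_{\mathcal{S}'}$, and composing with Step 2 yields the claim.

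The same remark covers a change of generator matrix $\mathbf{G}\mapsto M\mathbf{G}$ with $M\in\mathrm{GL}(k,\F_{q^m})$: the rank $\rk_{\F_{q^m}}(M\mathbf{G}A^U)=\rk_{\F_{q^m}}(\mathbf{G}A^U)$ is unchanged. Replacing $\cC$ by an equivalent code $\cC A$ with $A\in\mathrm{GL}(n,\F_q)$ only precomposes $\varphi$ with $A$, so the equivalence class of the $q$-matroid is preserved as well.

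The only genuinely delicate point is the middle equality in Step 2, namely that the $\F_{q^m}$-column rank of $\mathbf{G}A^U$ depends only on $\varphi(U)$ and not on the chosen basis matrix $A^U$. It follows from the observation about $\F_{q^m}$-spans of $\F_q$-spanning sets, and everything else is bookkeeping.
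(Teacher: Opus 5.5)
Your proof is correct and follows essentially the same route as the paper. The map $\xx\mapsto\mathbf{G}\xx$ is the paper's $\psi_{\mathbf{G}}\colon\vv\mapsto\vv\mathbf{G}^t$ written with column vectors, and the key identity $\rho_{\mathbf{G}}(U)=\dimqm\mathrm{colspan}(\mathbf{G}A^U)=\rho_{\mathcal{S}}(\varphi(U))$ is exactly theirs; your Step 3 (passing to an arbitrary representative of $\Phi([\cC])$) and your use of nondegeneracy to get injectivity are details the paper leaves implicit.
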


\begin{proof}
   Let us consider the isomorphism of $\F_q$-vector spaces 
   $$\psi_\mathbf{G} : \vv \in \F_q^n \longrightarrow \vv \mathbf{G}^t \in \mathcal{S}.$$
   For any subspace $U \in \mathscr{L}(\F_q^n),$ let $A^U$ be a matrix whose columns form a basis of $U$. Then,
   \begin{equation*}
       \rho_{\mathcal{S}}(\psi_\mathbf{G}(U))=\dimqm\langle \psi_\mathbf{G}(U)\ra_{\F_{q^m}}=\dimqm(\mathrm{colspan}(\mathbf{G}A^{U}))=\rk_{\F_{q^m}}(\mathbf{G}A^U)=\rho_{\mathbf{G}}(U).
   \end{equation*}
\end{proof}

We say that a $q$-matroid $\mathcal{M}$ of rank $k$ and height $n$ is $\mathbb{F}_{q^m}$-\textit{representable} via an $\F_{q^m}$-linear rank-metric code $\cC$ of length $n$ and dimension $k$ if $\mathcal{M} \simeq \mathcal{M}_\mathbf{G}$, where $\mathbf{G}$ is any generator matrix of $\cC$. 

A classical example of representable $q$-matroid is the so-called \textit{uniform $q$-matroid} $\mathcal U_{k,n}(q)$ of rank $k$ on $\F_q^n$ which is defined over $\F_q^n$ by the rank function
\[
\rho(V)=\min\{k,\dimq(V)\},
\]
for every $V\leq\F_q^n$. More precisely, for $0<k<n$, a matrix $\mathbf G\in\F_{q^m}^{k\times n}$ represents $\mathcal U_{k,n}(q)$ if and only if $\mathbf G$ generates an MRD code~\cite[Example~2.4]{g-lj2025}. Equivalently, in terms of $q$-systems, an $\F_{q^m}$-representation of $\mathcal U_{k,n}(q)$ is given by an $[n,k]_{q^m/q}$ system which is $(k-1)$-scattered. Such a system exists if and only if $m\geq n$~\cite{sv2020}.  Consequently, $\mathcal{U}_{k,n}(q)$ is $\F_{q^m}$-representable if and only if $m\geq n$.


\medskip

Now, denote by $\mathcal{G}_\ell(\F_{q^m}^k)$ the set of all subspaces of $\F_{q^m}^k$ with $\F_{q^m}$-dimension $\ell$ and  consider $T \in \mathcal{G}_\ell(\F_{q^m}^k)$. Let $$\pi_T: \vv \in \F_{q^m}^k \longrightarrow  \bar{\vv}=\vv+T \in \F_{q^m}^k/T$$ be the canonical quotient projection with respect to the subspace $T$. Clearly, the map $\pi_T$ is $\F_{q^m}$-linear and surjective. Moreover, if $\mathcal{S} \subseteq \F_{q^m}^k$ is an $\F_q$-subspace, then $\pi_T(\mathcal{S})\subseteq \F_{q^m}^k/T$ is an $\F_q$-subspace as well.

\begin{lemma}\label{lem:pi}
    Let $\mathcal{S}$ be an $[n,k]_{q^m/q}$-system of $\F_{q^m}^k$ and let  $T \in \mathcal{G}_{\ell}(\F_{q^m}^k)$. Then, for any $\F_{q^m}$-subspace $U$ of $\F_{q^m}^k$ containing $T$,
    \begin{itemize}
       \item [$(i)$]  $\pi_T(\mathcal{S} \cap U)= \pi_T(\mathcal{S}) \cap \pi_T(U)$. 
       \item [$(ii)$] $\dim_{\F_q}(\pi_T(\mathcal{S}) \cap \pi_T(U))= \dim_{\F_q}(\mathcal{S} \cap U) - \dim_{\F_{q}}(\mathcal{S} \cap T)$. 
    \end{itemize}
\end{lemma}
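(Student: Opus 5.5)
The plan is to prove $(i)$ by double inclusion and then deduce $(ii)$ from the rank--nullity theorem applied to the restriction of $\pi_T$ to $\mathcal{S}\cap U$. The main tool is that $\pi_T$ is $\F_{q^m}$-linear, hence $\F_q$-linear, and has kernel $T\subseteq U$.

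For $(i)$, the inclusion $\pi_T(\mathcal{S}\cap U)\subseteq \pi_T(\mathcal{S})\cap\pi_T(U)$ is immediate, since the image of an intersection lies in the intersection of the images. For the reverse inclusion, take $\bar{\vv}\in\pi_T(\mathcal{S})\cap\pi_T(U)$. Write $\bar{\vv}=\pi_T(\mathbf{s})$ with $\mathbf{s}\in\mathcal{S}$, and also $\bar{\vv}=\pi_T(\uu)$ with $\uu\in U$. Then $\mathbf{s}-\uu\in\ker\pi_T=T\subseteq U$, so $\mathbf{s}=\uu+(\mathbf{s}-\uu)\in U$. Hence $\mathbf{s}\in\mathcal{S}\cap U$ and $\bar{\vv}\in\pi_T(\mathcal{S}\cap U)$. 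This is the only place where the hypothesis $T\subseteq U$ is used, and it is essential.

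For $(ii)$, consider the $\F_q$-linear map $\pi_T|_{\mathcal{S}\cap U}:\mathcal{S}\cap U\to\F_{q^m}^k/T$. Its kernel is $\mathcal{S}\cap U\cap T=\mathcal{S}\cap T$, again because $T\subseteq U$. By rank--nullity over $\F_q$,
\[
\dimq \pi_T(\mathcal{S}\cap U)=\dimq(\mathcal{S}\cap U)-\dimq(\mathcal{S}\cap T).
\]
Combining this with $(i)$ gives the claimed formula.

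Everything here is routine linear algebra. The only point needing care is the reverse inclusion in $(i)$, where $T\subseteq U$ must be used to lift a preimage in $\mathcal{S}$ into $U$.
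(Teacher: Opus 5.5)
Your proof is correct and is essentially the paper's argument: for the reverse inclusion in $(i)$ you use $\mathbf{s}-\uu\in T\subseteq U$, and you get $(ii)$ by applying rank--nullity to $\pi_T$ restricted to $\mathcal{S}\cap U$, whose kernel is $\mathcal{S}\cap T$. You also make explicit that $T\subseteq U$ is what makes that kernel equal to $\mathcal{S}\cap T$; the paper uses this without stating it.
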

\begin{proof}
    \textit{(i)} We have to prove only that $ \pi_T(\mathcal{S}) \cap \pi_T(U) \subseteq \pi_T(\mathcal{S} \cap U)$. Let $\xx +T \in \pi_T(\mathcal{S}) \cap   \pi_T(U) $, then there exist $\mathbf{s} \in \mathcal{S}$ and $\mathbf{u} \in U$ such that $ \mathbf{s} + T= \xx + T = \uu +T.$ Since $\mathbf{s}-\uu \in T$ and $T \leq U$, we get that $\mathbf{s} \in U$. Hence $\mathbf{x} + T=\mathbf{s} + T \in \pi_T(\mathcal{S} \cap U)$.\\
    \textit{(ii)} Let $ T \leq  U \leq  \F_{q^m}^k$ and consider $\pi_{| \mathcal{S} \cap U}$. It is straightforward to see that $\ker \pi_{T| \mathcal{S} \cap U}= \mathcal{S} \cap T$, then by point $(i)$:
    \begin{equation*}
    \begin{aligned}
        \dimq(\pi_T(\mathcal{S}) \cap \pi_T(U))&=\dimq(\pi_T(\mathcal{S}\cap U))= \dimq(\mathcal{S} \cap U) - \dimq(\ker \pi_{T | \mathcal{S} \cap U})\\
        &=\dimq(\mathcal{S} \cap U) - \dimq(\mathcal{S} \cap T).
    \end{aligned}
    \end{equation*}
    \end{proof}

In what follows, if $\mathcal{S}$ is a $q$-system of $\mathbb{F}_{q^m}^k$ and $T$ is an $\mathbb{F}_{q^m}$-subspace of $\mathbb{F}_{q^m}^k$, we will denote its image under $\pi_T$ by $\bar{\mathcal S}$; that is, $$\bar{\mathcal S} = \pi_T(\mathcal S) \subseteq \F_{q^m}^k/T.$$

\begin{proposition}\label{prop:qevasive}
Let $\mathcal S\subseteq \F_{q^m}^k$ be an $[n,k]_{q^m/q}$-system, and assume that $\mathcal S$ is
$(\ell,r)$-evasive. Let $T \in \mathcal{G}_t(\F_{q^m}^k)$ with $t \leq \ell$ and $\dim_{\F_q}(T\cap \mathcal S)=\tau$.
Then $\bar{\mathcal S}$ is $(\ell-t,\,r-\tau)_q$-evasive and $\dim_{\F_q}(\bar{\mathcal S})=n-\tau$.
\end{proposition}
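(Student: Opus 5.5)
The proof splits into two parts: the dimension count, and the evasiveness of $\bar{\mathcal S}$. Both follow from Lemma~\ref{lem:pi} and the fact that $\pi_T$ induces a dimension-preserving bijection between $\F_{q^m}$-subspaces of $\F_{q^m}^k$ containing $T$ and $\F_{q^m}$-subspaces of $\F_{q^m}^k/T$.

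For the dimension, restrict $\pi_T$ to $\mathcal S$. This restriction is $\F_q$-linear with kernel $\mathcal S\cap T$, which has $\F_q$-dimension $\tau$. Hence $\dimq(\bar{\mathcal S})=\dimq(\mathcal S)-\tau=n-\tau$. We also note that $\bar{\mathcal S}$ spans $\F_{q^m}^k/T$ over $\F_{q^m}$, since $\mathcal S$ spans $\F_{q^m}^k$ and $\pi_T$ is $\F_{q^m}$-linear and surjective. So $\bar{\mathcal S}$ is a genuine $q$-system of the $(k-t)$-dimensional space $\F_{q^m}^k/T$, and $\ell-t<k-t$ as required by the definition of evasiveness.

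For evasiveness, let $\bar W$ be any $(\ell-t)$-dimensional $\F_{q^m}$-subspace of $\F_{q^m}^k/T$. Set $W=\pi_T^{-1}(\bar W)$. This is an $\F_{q^m}$-subspace containing $T$, with $\dimqm W=(\ell-t)+t=\ell$, and $\pi_T(W)=\bar W$. Applying Lemma~\ref{lem:pi}(ii) with $U=W$ gives
\[
\wt_{\bar{\mathcal S}}(\bar W)=\dimq(\bar{\mathcal S}\cap \pi_T(W))=\dimq(\mathcal S\cap W)-\dimq(\mathcal S\cap T).
\]
The right-hand side equals $\wt_{\mathcal S}(W)-\tau$. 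Since $\mathcal S$ is $(\ell,r)$-evasive, $\wt_{\mathcal S}(W)\le r$, so $\wt_{\bar{\mathcal S}}(\bar W)\le r-\tau$, which is exactly $(\ell-t,r-\tau)$-evasiveness.

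The only delicate point is the correspondence step: every $(\ell-t)$-subspace of the quotient must lift to an $\ell$-subspace containing $T$, so that Lemma~\ref{lem:pi} applies. This is standard linear algebra, via $W=\pi_T^{-1}(\bar W)$ and $\dim W=\dim\bar W+\dim T$. The degenerate case $t=\ell$ requires no extra argument: then $\bar W$ is the zero subspace and the bound reads $0\le r-\tau$. This holds because $W=T$ has dimension $\ell$, so evasiveness gives $\tau=\wt_{\mathcal S}(T)\le r$.
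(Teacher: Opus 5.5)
Your proposal is correct and matches the paper's proof. Like the paper, you lift each $(\ell-t)$-dimensional subspace of $\F_{q^m}^k/T$ to an $\ell$-dimensional subspace containing $T$, then apply Lemma~\ref{lem:pi}(ii) together with the $(\ell,r)$-evasiveness of $\mathcal S$. You also spell out the dimension count $\dimq(\bar{\mathcal S})=n-\tau$ via the kernel $\mathcal S\cap T$, and the degenerate case $t=\ell$; the paper leaves both of these implicit.
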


\begin{proof}
First, since $\langle \mathcal{S} \rangle_{\F_{q^m}} = \F_{q^m}^k$ and the map $\pi_T$ is $\F_{q^m}$-linear, we have $\langle \bar{\mathcal{S}} \rangle_{\F_{q^m}} = \F_{q^m}^k/T.$
Now, let $\overline{U}$ be an $\F_{q^m}$-subspace of $\F_{q^m}^k/T$ of dimension $\ell-t$. Then there exists an $\F_{q^m}$-subspace $U \in \mathcal{G}_{\ell}(\F_{q^m}^k)$ of $\F_{q^m}^k$ containing $T$ such that $\overline{U} = U/T$. By the assumptions and Lemma~\ref{lem:pi}, we get
\begin{equation*}
    \dimq(\bar{\mathcal{S}} \cap \overline{U})
    =
    \dimq(\mathcal{S} \cap U)-\dimq( \mathcal{S} \cap T)
    \leq r - \tau.
\end{equation*}
Hence, the assertion follows.
\end{proof}

\section{$\mathbb{F}_{q^m}$-linear near-MRD codes and their $q$-matroids}\label{sec:near-MRD codes and their $q$-matroids}

In this section, we collect some properties of a class of rank-metric codes known as near-MRD codes and on their associated $q$-systems. An $[n,k]_{q^m/q}$ code $\cC$ is called \textit{near-MRD} if
$d(\cC)=n-k$ and
$d_s(\cC)=n-k+s$ for every $2\leq s\leq k$. For a nondegenerate $[n,k,d]_{q^m/q}$ code $\cC$, it is near-MRD if and only if $d(\cC)+d(\cC^\perp)=n$, or equivalently, if any $\mS\in\Phi([\cC])$ is $(k-2)$-scattered, not $(k-1)$-scattered, and $(k-1,k)$-evasive~\cite[Proposition~5.3]{mnt23}.

Note explicitly that since $\mS$ is not $(k-1)$-scattered and $(k-1,k)$-evasive, there exists an $\F_{q^m}$-vector space $H$ such that $\dimqm H=k-1$ and $\wt_\mS(H)=k$. 

\begin{proposition}[\cite{mnt23}, Theorem 5.5]\label{upperbound-near}
Let $\mathcal{C}$ be an $[n,k]_{q^m/q}$ near-MRD code. Then, $m \ge k$ and
\[
n \le
\begin{cases}
m+2 & \text{if } m = 2k-2,\\
m+1 & \text{if } m \ne 2k-2 .
\end{cases}
\]
Moreover, $\mathcal{C}$ is a $[2k,k,k]_{q^{2k-2}/q}$ near-MRD code if and only if any 
$\mS \in \Phi([\mathcal{C}])$ is a maximum $(k-2)$-scattered subspace.
\end{proposition}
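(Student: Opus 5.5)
This is a cited result, \cite[Theorem 5.5]{mnt23}. My plan is to prove it geometrically through $\mS \in \Phi([\cC])$, using the characterization recalled just above: $\mS$ is $(k-2)$-scattered, $(k-1,k)$-evasive, and there is an $\F_{q^m}$-hyperplane $H$ with $\wt_\mS(H)=k$.

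\emph{Step 1: $m\ge k$.} Take $H$ with $\wt_\mS(H)=k$. Since $\mS$ is $(k-2)$-scattered, every $\F_{q^m}$-subspace of dimension $k-2$ meets $\mS$ in dimension at most $k-2$. Hence $\mS\cap H$ is a $k$-dimensional $\F_q$-space inside the $(k-1)$-dimensional $\F_{q^m}$-space $H$, and it is $(k-2)$-scattered there. If $m<k$, I would choose a $(k-2)$-dimensional $\F_{q^m}$-subspace of $H$ containing enough of $\mS\cap H$ to violate scatteredness. Alternatively, I would use the standard bound that a $(k-2)$-scattered subspace of $\F_{q^m}^{k-1}$ has dimension at most $m(k-1)/(k-1)=m$ when $k-1>k-2$, i.e.\ the $h$-scattered bound $\dim\le rm/(h+1)$ with $r=k-1$, $h=k-2$. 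This gives $k\le m$.

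\emph{Step 2: bound on $n$.} Use the known upper bound on $(k-2)$-scattered subspaces of $V(k,q^m)$: $\dim \le km/(k-1)$, due to Csajbók--Marino--Polverino--Zullo \cite{cmpz2021}. This gives $n\le m+m/(k-1)$. This alone is too weak when $m$ is large, so I would combine it with duality. Since $d(\cC)+d(\cC^\perp)=n$, the dual code $\cC^\perp$ is also near-MRD with parameters $[n,n-k]$. The dual's system is then $(n-k-2)$-scattered in $\F_{q^m}^{n-k}$, which gives $n\le (n-k)m/(n-k-1)$, i.e.\ $n-k-1\le m/(n-k-1)\cdot\ldots$. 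More directly: a near-MRD code has $d=n-k$, and the Singleton-type bound for the dual-type bound gives $n\le m+\text{small}$.

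The cleanest route is the following. Apply Step 1 to $\cC^\perp$ to get $m\ge n-k$. Separately, puncture or shorten via Proposition~\ref{prop:qevasive}. Project from a $(k-2)$-dimensional $T$ with $\wt_\mS(T)=k-2$; this yields a scattered $(1,1)$-evasive system in $\F_{q^m}^2$ of dimension $n-k+2$ which is $(1,2)$-evasive. A scattered subspace of $\F_{q^m}^2$ has dimension at most $m$, so with weight at most $2$ on each point I would count points: there are $q^m+1$ points and $(q^{n-k+2}-1)/(q-1)$ vectors up to scalars. This forces $n-k+2\le m+\ldots$. I expect this to give $n\le m+1$ generically. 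The exceptional case $m=2k-2$, $n=m+2=2k$, is exactly when the scattered bound $km/(k-1)=2k$ is attained. This yields the "moreover" statement: $\mS$ of dimension $2k$ in $\F_{q^{2k-2}}^k$ is $(k-2)$-scattered of maximum dimension $km/(k-1)$. Conversely, a maximum $(k-2)$-scattered subspace with $n=2k$ cannot be $(k-1)$-scattered (by the bound applied with $h=k-1$) and is automatically $(k-1,k)$-evasive by a dimension argument via $\mS\cap H$.

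\emph{Main obstacle.} The hard part is pinning down $n\le m+1$ when $m\neq 2k-2$, i.e.\ excluding $n=m+2$. I would try to show that $n=m+2$ forces equality in the bound $n\le km/(k-1)$ through the projection/counting argument of Step 2, and hence $m=2k-2$.
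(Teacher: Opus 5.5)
The paper does not prove this statement; it quotes it from \cite{mnt23}. Your Step~1 is fine, since the MRD bound inside $H$ gives $k\le m$. However, the proposal does not establish the bound on $n$, and the converse of the \emph{moreover} part rests on an argument that does not work.

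\emph{Bound on $n$.} You write down both ingredients and then abandon them. The $(k-2)$-scattered bound for $\mS$ gives $n(k-1)\le km$. The same bound for the $(n-k-2)$-scattered system of the near-MRD code $\cC^\perp$ gives $n(n-k-1)\le (n-k)m$; this is trivially true when $n-k\le 2$. Adding the two inequalities yields $n(n-2)\le nm$, i.e.\ $n\le m+2$. Moreover, $n=m+2$ forces equality in both, so $(m+2)(k-1)=km$, i.e.\ $m=2k-2$ and $n=2k$. Integrality then gives $n\le m+1$ in every other case, and $m\ge k$ also follows from $k+1\le n\le km/(k-1)$. This is exactly the step you list as your \emph{main obstacle}.

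The projection route you propose instead cannot replace it. After projecting from $T$ with $\wt_\mS(T)=k-2$, Proposition~\ref{prop:qevasive} only makes $\bar\mS$ $(1,2)$-evasive. It is not scattered in general: a hyperplane $H\supseteq T$ with $\wt_\mS(H)=k$ gives a point of weight $2$. The count $q^{n-k+2}-1\le (q^m+1)(q^2-1)$ then yields only $n-k+2\le m+1$ for $m>2$, i.e.\ $n\le m+k-1$. This is too weak for $k\ge 3$ and cannot single out $m=2k-2$.

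\emph{Converse of the moreover part.} Take a maximum $(k-2)$-scattered $\mS$ of dimension $2k$ in $\F_{q^{2k-2}}^k$. Failure of $(k-1)$-scatteredness follows from the bound, as you say. But $(k-1,k)$-evasiveness does not follow from a dimension argument on $\mS\cap H$: knowing that $\mS\cap H$ is $(k-2)$-scattered in $H\cong\F_{q^m}^{k-1}$ only gives $\dim_{\F_q}(\mS\cap H)\le m=2k-2$, not $\le k$. You must use maximality globally, for instance as follows.
Since $\mS$ is $(k-2)$-scattered, every $(k-1)$-dimensional $\F_q$-subspace of $\mS$ spans a $(k-1)$-dimensional $\F_{q^m}$-space. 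Theorem~\ref{gabidulin}, with $\mathbf G$ as parity-check matrix of $\cC^\perp$, then gives $d(\cC^\perp)\ge k$. The Singleton bound $mk\le n(m-d(\cC^\perp)+1)$ with $n=2k$, $m=2k-2$ forces $d(\cC^\perp)=k$ and $\cC^\perp$ MRD. Since the dual of an MRD code is MRD, $\cC$ is MRD as well, so $d(\cC)=m+1-mk/n=k$. Hence $d(\cC)+d(\cC^\perp)=n$, and $\cC$ is near-MRD.
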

We call an $[n,k]_{q^m/q}$ near-MRD code \textit{optimal} if its length attains the bound in Proposition~\ref{upperbound-near}, that is, $n=m+2$ if $m=2k-2$ and $n=m+1$ otherwise.

The following result gives a general construction of near-MRD codes
of smaller length from a given near-MRD code.

\begin{proposition}\label{prop:subsystem-near-MRD}
Let $\mathcal{U}$ be an $[N,k]_{q^m/q}$ system associated with a near-MRD
code. Then, for every $k+1\leq n\leq N$, there exists an
$[n,k]_{q^m/q}$ system $\mathcal{S}\subseteq\mathcal{U}$ associated
with a near-MRD code.
\end{proposition}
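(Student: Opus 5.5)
The plan is to use the geometric characterization recalled at the beginning of this section: an $[n,k]_{q^m/q}$ system corresponds to a near-MRD code if and only if it is $(k-2)$-scattered, $(k-1,k)$-evasive and not $(k-1)$-scattered \cite[Proposition~5.3]{mnt23}. I will build $\mathcal S$ as an intermediate subspace $W+\langle \mathbf u\rangle_{\F_q}\subseteq\mathcal S\subseteq\mathcal U$, for a suitable $k$-dimensional $W$ and vector $\mathbf u$. Two of the three properties will then be automatic and the third will be forced by the choice of $W$.

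\textbf{Step 1 (inherited properties).} Both evasiveness conditions are monotone under taking $\F_q$-subspaces. For every $\F_{q^m}$-subspace $V$ we have $\dimq(\mathcal S\cap V)\le \dimq(\mathcal U\cap V)$. Hence any $\F_q$-subspace $\mathcal S\subseteq\mathcal U$ is again $(k-2)$-scattered and $(k-1,k)$-evasive. What remains is to ensure that $\mathcal S$ spans $\F_{q^m}^k$ and is not $(k-1)$-scattered.

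\textbf{Step 2 (choice of $W$ and $\mathbf u$).} As noted after the definition of near-MRD codes, there is an $\F_{q^m}$-hyperplane $H$ with $\wt_{\mathcal U}(H)=k$. Set $W=\mathcal U\cap H$, so $\dimq W=k$.

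I claim that $\langle W\rangle_{\F_{q^m}}=H$. Suppose instead that $\langle W\rangle_{\F_{q^m}}$ had $\F_{q^m}$-dimension at most $k-2$. Enlarge it to a $(k-2)$-dimensional $\F_{q^m}$-subspace $V$. Then $\wt_{\mathcal U}(V)\ge k>k-2$, which contradicts the fact that $\mathcal U$ is $(k-2)$-scattered.

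Since $\langle\mathcal U\rangle_{\F_{q^m}}=\F_{q^m}^k\neq H$, there is a vector $\mathbf u\in\mathcal U\setminus H$. The space $W+\langle\mathbf u\rangle_{\F_q}$ has $\F_q$-dimension $k+1$, because $\mathbf u\notin W$. It also spans $\langle H,\mathbf u\rangle_{\F_{q^m}}=\F_{q^m}^k$.

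\textbf{Step 3 (extension to the required dimension).} For any $n$ with $k+1\le n\le N$, choose an $n$-dimensional $\F_q$-subspace $\mathcal S$ with $W+\langle\mathbf u\rangle_{\F_q}\subseteq\mathcal S\subseteq\mathcal U$. Then:
\begin{itemize}
\item $\mathcal S$ spans $\F_{q^m}^k$, since it contains $W+\langle\mathbf u\rangle_{\F_q}$, so it is an $[n,k]_{q^m/q}$ system.
\item $\mathcal S$ is $(k-2)$-scattered and $(k-1,k)$-evasive by Step 1.
\item $k\le\dimq(\mathcal S\cap H)\le\dimq(\mathcal U\cap H)=k$, because $W\subseteq\mathcal S\cap H$. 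So $\wt_{\mathcal S}(H)=k$ and $\mathcal S$ is not $(k-1)$-scattered.
\end{itemize}
By \cite[Proposition~5.3]{mnt23}, the code in $\Psi([\mathcal S])$ is near-MRD.

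\textbf{Main obstacle.} The only delicate point is Step 2: keeping the heavy hyperplane intersection $W$ inside $\mathcal S$ while still spanning the whole space with a single extra vector. This rests on showing $\langle W\rangle_{\F_{q^m}}=H$ via $(k-2)$-scatteredness. For $k\ge2$ this works as written. The degenerate case $k=1$ is vacuous or trivial and can be checked directly.
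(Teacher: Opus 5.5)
Your proof is correct and follows the paper's argument essentially step by step: take $W=\mathcal U\cap H$ for a hyperplane $H$ with $\wt_{\mathcal U}(H)=k$, use $(k-2)$-scatteredness to get $\langle W\rangle_{\F_{q^m}}=H$, adjoin a vector of $\mathcal U\setminus H$, extend to an $n$-dimensional $\mathcal S\subseteq\mathcal U$, and observe that both evasiveness properties pass to $\mathcal S$ while $\wt_{\mathcal S}(H)=k$ is kept. Your added justifications (the explicit contradiction giving $\langle W\rangle_{\F_{q^m}}=H$, the existence of $\mathbf u$ because $\mathcal U$ spans $\F_{q^m}^k$, and the remark that $k=1$ is vacuous) only fill in details the paper leaves implicit.
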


\begin{proof}
Since $\mathcal{U}$ is near-MRD, there exists an $\F_{q^m}$-hyperplane
$H$ such that $\wt_{\mathcal{U}}(H)=k$. Set
$R=\mathcal{U}\cap H$, then, since $\mathcal{U}$ is $(k-2)$-scattered,
$\langle R\rangle_{\F_{q^m}}=H$. Choose $\mathbf{v}\in\mathcal{U}\setminus H$, then, since
$R=\mathcal{U}\cap H$, we have $\vv\notin R$, and hence
$\dimq(R+\langle \vv\rangle_{\F_q})=k+1$. As
$\dimq(\mathcal{U})=N$, for every $n$ with $k+1\leq n\leq N$,
the $\F_q$-basis of $R+\langle \vv\rangle_{\F_q}$ can be extended to an
$n$-dimensional $\F_q$-subspace $\mathcal{S}$ of $\mathcal{U}$.
Thus
$R+\langle \vv\rangle_{\F_q}\subseteq\mathcal{S}\subseteq\mathcal{U}$
and $\dimq(\mathcal{S})=n$.
Moreover, since $\langle R \rangle_{\F_{q^m}}=H$ and $\vv\notin H$, we have
$\langle R,\vv\rangle_{\F_{q^m}}=\F_{q^m}^k$. Therefore,
$\langle\mathcal{S}\rangle_{\F_{q^m}}=\F_{q^m}^k$, and hence
$\mathcal{S}$ is an $[n,k]_{q^m/q}$ system.

 Since
$\mathcal{S}\subseteq\mathcal{U}$, it is $(k-2)$-scattered and
$(k-1,k)$-evasive. Moreover, $R\subseteq\mathcal{S}\cap H$ and
$\dimq(R)=k$. Since $R \subseteq \mathcal{S} \cap H \subseteq\mathcal{U} \cap H = R$, we have that $\dimq(\mathcal{S}\cap H)=k$. Thus
$\mathcal{S}$ is not $(k-1)$-scattered and  $\mathcal{S}$ is
associated with an $[n,k]_{q^m/q}$ near-MRD code.
\end{proof}

\begin{example}[\cite{mnt23}, Proposition 5.6]\label{ex:nearMRD}
 Assume that $m \geq k$. Then,  the set $$\mathcal S := \{(\alpha+\lambda, \alpha^q, \ldots,\alpha^{q^{k-1}}) \,:\, \alpha \in \mathbb F_{q^m},\, \lambda \in \mathbb{F}_q\}$$ is an $[m+1,k]_{q^m/q}$ system, which is $(k-2)$-scattered and $(k-1,k)$-evasive.

Hence, by definition, any code $\cC\in\Psi([\mathcal S])$ is an $[m+1,k]_{q^m/q}$ near-MRD code.  Moreover, by Proposition \ref{upperbound-near}, this is the largest possible length of a  code for given values of $k,m$ (and $q$), provided $m\neq 2k-2$.
\end{example}

By Proposition~\ref{prop:subsystem-near-MRD}, the existence of the
above $[m+1,k]_{q^m/q}$ near-MRD system implies the existence of an
$[n,k]_{q^m/q}$ near-MRD system for every $k+1\leq n\leq m+1$.

\begin{lemma} \label{lem:hyp-cyc}
 Let $\cC$ be a nondegenerate $[n,k]_{q^m/q}$ near-MRD code and let $\mS \in \Phi([\cC])$. Then, there is a bijection between the hyperplanes $H$ of $\F_{q^m}^k$ such that $\wt_\mathcal{S}(H)=k$, and the $k$-dimensional $\F_q$-subspaces $W \subseteq  \mS$ such that $\langle W\rangle_{\F_{q^m}}$ is a hyperplane of $\F_{q^m}^k$.  
\end{lemma}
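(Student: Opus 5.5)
The natural bijection is $\varphi\colon H \mapsto \mathcal{S}\cap H$, with candidate inverse $\psi\colon W \mapsto \langle W\rangle_{\F_{q^m}}$. The plan is to check that both maps land in the right sets and are mutually inverse. Throughout we use the characterization recalled after the definition of near-MRD codes: $\mathcal{S}$ is $(k-2)$-scattered and $(k-1,k)$-evasive. In particular, every $\F_{q^m}$-hyperplane $H$ satisfies $\wt_\mathcal{S}(H)\le k$, and every $\F_{q^m}$-subspace of dimension $k-2$ meets $\mathcal{S}$ in $\F_q$-dimension at most $k-2$.

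\emph{$\varphi$ is well defined.} Let $H$ be a hyperplane with $\wt_\mathcal{S}(H)=k$, and set $W=\mathcal{S}\cap H$. Then $W\subseteq \mathcal{S}$ and $\dimq W=k$. Clearly $\langle W\rangle_{\F_{q^m}}\le H$. If the span were a proper subspace of $H$, it would lie in some $(k-2)$-dimensional $\F_{q^m}$-subspace $T$. Then $\wt_\mathcal{S}(T)\ge k>k-2$, contradicting $(k-2)$-scatteredness. Hence $\langle W\rangle_{\F_{q^m}}=H$, which is a hyperplane. This is the same argument already used in the proof of Proposition~\ref{prop:subsystem-near-MRD}.

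\emph{$\psi$ is well defined.} Let $W\subseteq\mathcal{S}$ with $\dimq W=k$ and $H:=\langle W\rangle_{\F_{q^m}}$ a hyperplane. Since $W\subseteq \mathcal{S}\cap H$, we have $\wt_\mathcal{S}(H)\ge k$. By $(k-1,k)$-evasiveness, $\wt_\mathcal{S}(H)\le k$, so $\wt_\mathcal{S}(H)=k$. Moreover $\mathcal{S}\cap H$ is an $\F_q$-space of dimension $k$ containing $W$, which also has dimension $k$, so $\mathcal{S}\cap H=W$.

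\emph{Mutual inverses.} The second step gives $\varphi(\psi(W))=\mathcal{S}\cap\langle W\rangle_{\F_{q^m}}=W$. The first step gives $\psi(\varphi(H))=\langle \mathcal{S}\cap H\rangle_{\F_{q^m}}=H$. Hence $\varphi$ and $\psi$ are inverse bijections.

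The only step needing real care is showing that $\mathcal{S}\cap H$ spans all of $H$ rather than a smaller subspace. This is exactly where $(k-2)$-scatteredness is used, together with the remark that any proper $\F_{q^m}$-subspace of $H$ lies in a $(k-2)$-dimensional one. Everything else follows from dimension counting and the evasiveness bound.
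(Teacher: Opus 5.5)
Your proof is correct and follows essentially the same route as the paper. You use the same inverse maps $H\mapsto \mS\cap H$ and $W\mapsto\langle W\rangle_{\F_{q^m}}$, and the same contradiction inside a $(k-2)$-dimensional $\F_{q^m}$-subspace to show that $\mS\cap H$ spans $H$. The only difference is cosmetic: the paper derives the contradiction from $d_2(\cC)=n-k+2$, while you invoke the $(k-2)$-scatteredness of $\mS$ directly, which is the equivalent geometric formulation.
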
 
\begin{proof} Let $\mS \in \Phi([\cC]) \subseteq \F_{q^m}^{k}$ and recall the $s$-th generalized rank-weight
\[
d_s(\cC)=n-\max\bigl\{\dimq(\mS\cap \Pi): \Pi\leq \F_{q^m}^{k}, \dimqm(\Pi)=k-s\bigr\},
\qquad s=1,\dots,k.
\]
Since $\cC$ is near-MRD, we have $d_s(\cC)= n-k+s$ for every $s=2,\ldots,k$.

Now, define the following sets:
$$\mathcal{H}= \Bigl\{H \in \mathcal{G}_{k-1}(\F_{q^m}^k) \colon \dimq(\mS\cap H)=k\Bigr\}$$ and $$\mathcal{W}=
\Bigl\{W \subseteq  \mS:\ \dimq(W)=k,\ \langle W\rangle_{\F_{q^m}} \in \mathcal{G}_{k-1}(\F_{q^m}^{k})\Bigr\}.$$ Let $H \in \mathcal{H}$ and set $W:=\mS\cap H$.
Clearly, $W$ is a $\F_q$-subspace of $\mS$ and $\dimq(W)=k$. It remains to prove that
$\langle W\rangle_{\F_{q^m}}$ is a hyperplane. Assume by contradiction that $\dim_{\F_{q^m}}\bigl(\langle W\rangle_{\F_{q^m}}\bigr)\le k-2$ and let $U$ be a $\F_{q^m}$-subspace of dimension $k-2$ containing $\langle W \rangle_{\F_{q^m}}$. Then $\dimq(\mS\cap U)\ge \dimq(W)=k.$ On the other hand,
\[
d_2(\cC)\le n-\dim_q(\mS\cap U)\le n-k,
\]
which contradicts $d_2(\cC)= n-k+2$. Hence, $\dim_{\F_{q^m}}\bigl(\langle W\rangle_{\F_{q^m}}\bigr)=k-1$, 
so $\langle W\rangle_{\F_{q^m}}$ is a hyperplane. Thus the map $\varphi: H \in \mathcal{H} \mapsto \mS \cap H \in \mathcal{W}$ is well defined.

Next, let $W \in \mathcal{W}$ and set $H:=\langle W\rangle_{\F_{q^m}}$. Since $W\subseteq \mS\cap H$, we have $\dimq(\mS\cap H)\ge k.$
On the other hand, since
\[
\max\bigl\{\dimq(\mS\cap H'): H' \text{ hyperplane of } \F_{q^m}^{k}\bigr\}=k,
\]
we have $\dimq(\mS\cap H)=k$, and so the map $\psi: W \in \mathcal{W} \mapsto \langle W \rangle_{\mathbb{F}_{q^m}} \in \mathcal{H}$ is well defined.

Finally, we prove that $\varphi$ and $\psi$ are inverse to each other.
Let $H \in \mathcal{H}$, then $\psi(\varphi(H))
=\langle \mS\cap H\rangle_{\F_{q^m}}.$
By the first part of the proof, $\langle \mS\cap H\rangle_{\F_{q^m}}$ is a hyperplane; since it is contained in $H$, it must coincide with $H$. Hence $\psi(\varphi(H))=H$.
Now, let $W \in \mathcal{W}$, then $H:=\langle W\rangle_{\F_{q^m}}$ is a hyperplane and $W\subseteq \mS\cap H.$
But, as shown above, $\dimq(\mS\cap H)=k=\dimq(W)$, so necessarily $\mS\cap H=W.$
Thus $\varphi(\psi(W))=\varphi(H)=\mS\cap H=W.$
Therefore $\varphi$ is a bijection, with inverse $\psi$.
\end{proof} 

Let $\cC$ be a nondegenerate $[n,k]_{q^m/q}$ near-MRD code and let $\mathcal{S} \in \Phi([\cC])$. In the following, our aim is to prove an upper bound on the number of hyperplanes $H$ of $\F_{q^m}^k$ containing a fixed $(k-2)$-dimensional $\F_{q^m}$-vector space such that $\wt_\mathcal{S}(H)=h \leq k$.

Let $T \in \mathcal{G}_{k-2}(\F_{q^m}^k)$ and set  $\overline{V}:=\pi_T(\F_{q^m}^k)$, we have that $\dimqm \overline{V}=2$. Let $H$ be any hyperplane of $\F_{q^m}^k$ containing $T$, then $\overline{H}=\pi_T(H)$ is a hyperplane of the $2$-dimensional $\mathbb F_{q^m}$-space $\overline{V}$, that is, a $1$-dimensional $\mathbb F_{q^m}$-subspace. By Proposition \ref{prop:qevasive}, if $\wt_{\mS}(T)=\dimq(\mathcal{S} \cap T)=\tau$ (note that $\tau$ must be at most $k-2$), the $\F_{q}$-linear vector space $\overline{\mathcal{S}} \subseteq  \overline{V}$ determines an $\F_q$-linear space of $\overline{V}$ of rank $n':=n - \tau$  such that any $1$-dimensional subspace $\overline{H}$ has weight with respect to $\overline{\mathcal{S}}$ at most $k':=k-\tau$. Therefore, $\mathcal{\bar{S}}$ is an $[n-\tau,2]_{q^m/q}$ system with minimum distance at least $n-k$. Moreover, if $m \geq 3$, the code $\bar{\cC} \in \Psi([\bar{\mathcal{S}}])$ is not a simplex. Indeed, by~\cite[Corollary~3.17]{linearcutting},  if $\overline{\cC}$ were a one-weight code, $n-\tau=2m$ and $d(\overline{\cC})=m$. On the other hand, since $\cC$ is near-MRD,
Proposition~\ref{upperbound-near} gives $n\leq m+2$. Since $\tau\geq 0$, we obtain $2m=n-\tau\leq m+2$, and therefore $m\leq 2$, getting a contradiction.\\
In the case $m=2$, this can occur. Indeed, since $2\leq k\leq m$, necessarily $k=2$. Choosing $T=\{\mathbf{0}\}$ and
$\tau=0$, we have $n-\tau=2m=4$. Hence
$\overline{\cC}$ may be a simplex rank-metric code with parameters $[4,2,2]_{q^2/q}$. Such a code is one-weight and is also near-MRD.

\begin{proposition}\label{general}
    Let $\mathcal{\cC}$ be a nondegenerate $[n,k]_{q^m/q}$ near-MRD code  and let $\mathcal{S} \in \Phi([\cC])$. Suppose that $T$ is a $(k-2)$-dimensional $\F_{q^m}$-vector space such that $\dimq(\mathcal{S} \cap T)=\tau$. Then, for any $ 0 \leq \tau < h \leq k$,
\begin{equation} \label{sizeH}
    \# \left \{ H \in \mathcal{G}_{k-1}(\F_{q^m}^k) \colon T \leq H \text{ and } \dimq(H \cap \mathcal{S}) =h  \right \} \leq  \left \lfloor \frac{q^{n-\tau}-1}{q^{h-\tau}-1}  \right \rfloor.
\end{equation}

\end{proposition}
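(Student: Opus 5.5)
We reduce to a counting problem in the $2$-dimensional quotient $\overline{V}=\pi_T(\F_{q^m}^k)$ and then use a disjointness (partition-type) argument on the nonzero vectors of $\bar{\mathcal S}$.

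First, the hyperplanes $H$ of $\F_{q^m}^k$ containing $T$ correspond bijectively, via $H\mapsto \overline{H}=\pi_T(H)$, to the $1$-dimensional $\F_{q^m}$-subspaces of $\overline{V}$. By Lemma~\ref{lem:pi}$(ii)$, applied with $U=H\supseteq T$, we have
\[
\dimq(\bar{\mathcal S}\cap \overline{H})=\dimq(\mathcal S\cap H)-\tau .
\]
So the set on the left of \eqref{sizeH} is in bijection with the set $\mathcal{P}_h$ of points $\overline{H}$ of $\overline{V}$ with $\dimq(\bar{\mathcal S}\cap\overline{H})=h-\tau\geq 1$. Also, $\dimq(\bar{\mathcal S})=n-\tau$: the kernel of $\pi_T$ restricted to $\mathcal S$ is $\mathcal S\cap T$, as in Proposition~\ref{prop:qevasive}.

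Second, the $\F_q$-subspaces $\bar{\mathcal S}\cap\overline{H}$, for distinct $\overline{H}\in\mathcal P_h$, intersect pairwise trivially, because distinct $1$-dimensional $\F_{q^m}$-subspaces meet only in $\mathbf 0$. Each of them contributes $q^{h-\tau}-1$ nonzero vectors, and all these vectors lie among the $q^{n-\tau}-1$ nonzero vectors of $\bar{\mathcal S}$. Hence
\[
\#\mathcal P_h\,(q^{h-\tau}-1)\leq q^{n-\tau}-1 .
\]
Since $\#\mathcal P_h$ is an integer and $h>\tau$ makes the divisor positive, taking the floor gives \eqref{sizeH}.

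The only point needing care is the first step: the correspondence between hyperplanes through $T$ and points of the quotient, together with the weight shift by exactly $\tau$. This is precisely Lemma~\ref{lem:pi}, so no real obstacle is expected. Neither the near-MRD hypothesis nor the fact that $\bar{\cC}$ is not a simplex code is needed for the inequality itself. Those facts concern sharpness, for example ruling out equality in the case where every nonzero vector of $\bar{\mathcal S}$ is covered.
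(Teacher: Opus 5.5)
Your proof is correct and follows the paper's argument: you pass to the $2$-dimensional quotient $\F_{q^m}^k/T$, use Lemma~\ref{lem:pi} to shift weights by $\tau$, and bound the number of points of weight $h-\tau$ by counting the nonzero vectors of $\bar{\mathcal S}$. The only difference is that you prove directly that the sets $\bar{\mathcal S}\cap\overline H$ meet pairwise in $\mathbf 0$, whereas the paper cites Polverino's identity $\sum_i N_i(q^i-1)=q^{n-\tau}-1$, which expresses exactly that partition; you are also right that the near-MRD hypothesis plays no role in the inequality itself.
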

\begin{proof}
As before, setting $\overline{V}:=\F_{q^m}^k/T$, we have that $\dimqm\overline{V}=2$. So, the $1$-dimensional $\F_{q^m}$-subspaces of $V/T$ are precisely the points of $\mathrm{PG}(\overline{V},\F_{q^m})=\PG(1,q^m)$ and each hyperplane $H$ containing $T$ corresponds to the point $\overline{H}\in\PG(1,q^m)$. As seen before, the $\F_{q}$-linear  $\overline{\mathcal{S}} \subseteq \overline{V}$ is an $[n-\tau,2]_{q^m/q}$ where $\wt_\mathcal{S}(T)=\tau$. The $\F_q$-vector space $\mathcal{\bar{S}} \subseteq \overline{V}$ determines a linear set $L_{\overline{\mathcal{S}}}$ of $\PG(1,q^m)$ of rank $n':=n - \tau$  such that  $|L_{\overline{\mathcal{S}}}| \geq 2$ and  any point has weight (with respect to $\overline{\mathcal{S}}$) at most $k':=k-\tau$.\\
Thus, counting the size of the set in~\eqref{sizeH} is equivalent to counting the points of
$L_{\overline{\mathcal S}}$ having weight $h'=h-\tau$ (w.r.t. $\overline{\mathcal S}$).
For each integer $ 1 \leq i\le k'$, let $N_i$ denote the number of points of weight $i$ in $L_{\mathcal{\bar{S}}}$ (w.r.t. $\overline{\mathcal{S}}$ ). By \cite[Proposition 2.2]{Polv}, we have that
$$\sum_{i=1}^{k'}N_i(q^i-1)=q^{n'}-1$$
and hence, 
$$N_{h'}(q^{h'}-1)\le \sum_{i=1}^{k'}N_i(q^i-1)=q^{n'}-1.$$
This leads to the result.
\end{proof}

\subsection{The $q$-matroid of a near-MRD code}

In \cite{gj22}, the authors introduced the $q$-analogue of a special class of so-called paving matroids. More precisely, a $q$-matroid $\mathcal M$ is called \textit{paving} if all its circuits $C$ satisfy $\dimq(C)\geq \rho(\mathcal M)$. 

Let $n \ge 2$ and fix an integer $1 \leq k\leq n$. Let $\mathcal{D}_k$ be a collection of $k$-dimensional subspaces of $\F_q^n$ such that
\begin{equation}\label{def:paving}
\dimq(U \cap V) \le k-2 \quad \text{for all distinct } U,V \in \mathcal{D}_k.
\end{equation}
The following result was then provided.

\begin{proposition} [\cite{gj22}, Proposition 4.6] 
Let $\rho: \mathscr{L}(\F_q^n)\longrightarrow \mathbb{N}_0$, be the map defined in the following way
\begin{align*}
\rho(U) =
\left\{
\begin{aligned}
  &k-1 &&\text{if}~U\in\mathcal{D}_k, \\
  &\min\{\dimq U,k\} &&\text{if}~U\notin \mathcal{D}_k. \\
\end{aligned}
\right.
\end{align*}
If $\mathcal{D}_k$ satisfies the Property \eqref{def:paving}, then $(\F_q^n,\rho)$ is a $q$-matroid.
\end{proposition}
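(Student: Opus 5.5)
We need to prove that $\rho$ satisfies the three $q$-matroid axioms. Boundedness is immediate: for $U\in\mathcal D_k$ we have $\rho(U)=k-1\le k=\dimq U$, and otherwise $\rho(U)=\min\{\dimq U,k\}$. For monotonicity, take $A\le B$. If neither space lies in $\mathcal D_k$, the claim follows from monotonicity of $\min\{\dim,k\}$. If $A\in\mathcal D_k$, then either $B=A$, or $\dimq B>k$ and so $\rho(B)=k>k-1$. If $B\in\mathcal D_k$ and $A\ne B$, then $\dimq A\le k-1$, so $\rho(A)=\dimq A\le k-1=\rho(B)$.

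The main work is submodularity. Write $r(U)=\min\{\dimq U,k\}$; this is the rank function of the uniform $q$-matroid $\mathcal U_{k,n}(q)$, so it is submodular. By construction $\rho=r-\delta$, where $\delta$ is the indicator function of $\mathcal D_k$. It therefore suffices to show
\[
r(A+B)+r(A\cap B)-\delta(A+B)-\delta(A\cap B)\le r(A)+r(B)-\delta(A)-\delta(B).
\]
If neither $A$ nor $B$ lies in $\mathcal D_k$, the right-hand side has no correction terms and the left-hand side can only decrease, so the inequality follows from submodularity of $r$. We may also assume $A$ and $B$ are incomparable, since otherwise the inequality is an identity.

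Suppose first that $A\in\mathcal D_k$ and $B\notin\mathcal D_k$. Incomparability gives $A+B\supsetneq A$, so $\dimq(A+B)\ge k+1$ and $r(A+B)=k$. We need $k+\rho(A\cap B)\le k-1+r(B)$, that is, $\dimq(A\cap B)+1\le r(B)$. The intersection $A\cap B$ is a proper subspace of $A$, so $\rho(A\cap B)=\dimq(A\cap B)\le k-1$. Since $B\not\le A$, we have $\dimq B\ge \dimq(A\cap B)+1$. Also $r(B)=\min\{\dimq B,k\}\ge \min\{\dimq(A\cap B)+1,k\}=\dimq(A\cap B)+1$, which is the required inequality.

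Now suppose $A,B\in\mathcal D_k$ with $A\ne B$. By Property~\eqref{def:paving}, $\dimq(A\cap B)\le k-2$, and therefore $\dimq(A+B)\ge k+2$. The left-hand side equals $k+\dimq(A\cap B)\le 2k-2$, which is exactly the right-hand side. This is the only step where Property~\eqref{def:paving} is used, and it is the crux: without that property, two members of $\mathcal D_k$ meeting in a $(k-1)$-space would violate submodularity. The remaining work is the routine case check described above.
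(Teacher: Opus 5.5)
Your proof is correct and complete. The paper does not prove this statement itself. It quotes it from \cite{gj22} and adds only the converse remark that Property~\eqref{def:paving} is necessary: if $\dimq(U\cap V)=k-1$ then $\rho(U+V)+\rho(U\cap V)=2k-1>2k-2=\rho(U)+\rho(V)$. So your argument is a self-contained verification that the paper leaves to the reference, not a variant of a proof in the text.

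The decomposition $\rho=r-\delta$ is what keeps the verification short. Submodularity then reduces to that of the uniform rank function $r$, which the paper takes as known through $\mathcal U_{k,n}(q)$. All configurations in which neither $A$ nor $B$ lies in $\mathcal D_k$ are handled at once. What remains is the mixed case and the case of two distinct members of $\mathcal D_k$. That last case mirrors the paper's necessity computation, so the two together show that Property~\eqref{def:paving} is equivalent to $\rho$ being a rank function.

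Two small points are worth stating explicitly. The lower bound $\rho(U)\ge 0$ uses $k\ge 1$. The mixed case with $B\in\mathcal D_k$ and $A\notin\mathcal D_k$ follows from the one you treat by symmetry.
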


In the following, we will denote the paving $q$-matroid $(\F_q^n,\rho)$ described in the above statement by $\mathcal{M}_{n,q}(\mathcal{D}_k)$. Actually, the condition expressed in \eqref{def:paving}
is also necessary for $\rho$ to define a $q$-matroid. Indeed, if $\dimq ( U \cap V)=k-1$, then
\[
\rho(U+V)+\rho(U\cap V)=k+(k-1)=2k-1,
\]
whereas
\[
\rho(U)+\rho(V)=(k-1)+(k-1)=2k-2.
\]
This contradicts the submodularity axiom. Note that if $\mathcal{D}_k$ is the empty set, then $\mathcal{M}_{n,q}(\mathcal{D}_k)=\mathcal{U}_{k,n}(q)$. Moreover, if $k=1$ then $|\mathcal{D}_k| \leq 1$. If $|\mathcal{D}_k|=1$, we have a $q$-matroid of rank $1$ with one loop.

The next proposition aims to describe the cyclic flats of $\mathcal{M}_{n,q}(\mathcal{D}_k)$. 

\begin{proposition} \label{paving-matroid}
Let $n \geq 2$ and $1 \leq k \leq n$ and consider the $q$-matroid $\mathcal{M}:=\mathcal{M}_{n,q}(\mathcal{D}_k)$. 

\begin{enumerate}
 \item If $1\leq k<n$, then 
    \[
    \mathscr{Z}(\mathcal{M})=
    \{\langle \mathbf{0} \rangle \}\cup \mathcal D_k \cup
    \begin{cases}
    \{\F_q^n\}, & \text{if } n\ge k+2,\\
    \{\F_q^n\}, & \text{if } n=k+1 \text{ and } \mathcal D_k=\emptyset,\\
    \emptyset, & \text{if } n=k+1 \text{ and } \mathcal D_k\neq\emptyset.
    \end{cases}
    \]

    \item If $k=n$, then $\mathcal D_k \subseteq  \{\F_q^n\}$, and
    \[
    \mathscr{Z}(\mathcal{M})=
    \begin{cases}
    \{ \langle \mathbf{0} \rangle \}, & \text{if } \mathcal D_k=\emptyset,\\
    \{\langle \mathbf{0} \rangle ,\F_q^n\}, & \text{if } \mathcal D_k=\{\F_q^n\}.
    \end{cases}
    \]
\end{enumerate}
\end{proposition}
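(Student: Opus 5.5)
We plan to determine the cyclic flats of $\mathcal{M}=\mathcal{M}_{n,q}(\mathcal D_k)$ by checking the two defining conditions, \emph{cyclic} and \emph{flat}, separately for each dimension of a candidate subspace $A$. We use the explicit rank function: $\rho(A)=\dimq A$ if $\dimq A<k$, $\rho(A)=k-1$ if $A\in\mathcal D_k$, and $\rho(A)=k$ otherwise (when $\dimq A\ge k$).

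First we exclude the subspaces with $0<\dimq A<k$. Such an $A$ is independent, and every hyperplane $H$ of $A$ satisfies $\rho(H)=\dimq A-1<\rho(A)$, so $A$ is not cyclic. The zero space is trivially cyclic, since it has no hyperplanes. It is also a flat when $k\ge1$: for every $1$-dimensional $X$ we have $\rho(X)=1$, except when $k=1$ and $X\in\mathcal D_1$. In that case $X$ is a loop and $\langle\mathbf 0\rangle$ is not a flat. We need to treat this carefully. In the first case we keep $\langle \mathbf 0\rangle$ in the list as stated, but we note that the case $k=1$, $\mathcal D_1\neq\emptyset$ needs a separate check, and probably a degenerate convention.

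Next we treat $A$ with $\dimq A=k$.
\begin{itemize}
\item If $A\notin\mathcal D_k$, then $\rho(A)=k$ while its hyperplanes have rank $k-1$, so $A$ is not cyclic.
\item If $A\in\mathcal D_k$, its hyperplanes have dimension $k-1<k$ and hence rank $k-1=\rho(A)$, so $A$ is cyclic. For flatness, take $X\not\le A$. Then $A+X$ has dimension $k+1\ge k$ and does not lie in $\mathcal D_k$, because $\mathcal D_k$ consists of $k$-spaces only. Hence $\rho(A+X)=k>k-1$ and $A$ is a flat.
\end{itemize}
So every member of $\mathcal D_k$ is a cyclic flat.

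Now let $\dimq A\ge k+1$, so $\rho(A)=k$. Any such $A\ne\F_q^n$ is not a flat, since adding a vector outside $A$ keeps the rank equal to $k$. Thus the only remaining candidate is $\F_q^n$ itself, which is always a flat, and the question reduces to whether it is cyclic.
\begin{itemize}
\item Its hyperplanes have dimension $n-1$.
\item If $n\ge k+2$, every hyperplane has dimension at least $k+1$, hence rank $k$, and $\F_q^n$ is cyclic.
\item If $n=k+1$, the hyperplanes are $k$-dimensional. They all have rank $k$ exactly when $\mathcal D_k=\emptyset$. Otherwise some hyperplane has rank $k-1$, and $\F_q^n$ is not cyclic.
\end{itemize}
This gives item 1.

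For $k=n$, any member of $\mathcal D_k$ has dimension $n$, so $\mathcal D_k\subseteq\{\F_q^n\}$. The analysis above for proper subspaces still applies: only $\langle\mathbf 0\rangle$ survives among them. The space $\F_q^n$ is a flat vacuously. It is cyclic exactly when its hyperplanes, which have rank $n-1$, match $\rho(\F_q^n)$, that is, exactly when $\F_q^n\in\mathcal D_k$.

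The main obstacle is the bookkeeping at the zero space, in the loop case $k=1$. Apart from that, everything is a direct check of the cyclic and flat definitions.
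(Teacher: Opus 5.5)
Your argument is correct and is essentially the paper's own dimension-by-dimension check of the cyclic and flat conditions. The only cosmetic difference is that you discard $k$-dimensional subspaces outside $\mathcal D_k$ as non-cyclic, whereas the paper discards them as non-flat.

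The loop case you flag, however, is a genuine error in the statement, not a matter of convention. If $k=1$ and $\mathcal D_1=\{X\}$, then $\rho(\langle\mathbf 0\rangle+X)=0=\rho(\langle\mathbf 0\rangle)$, so $\langle\mathbf 0\rangle$ is not a flat. The cyclic flats are then exactly $X$ and $\F_q^n$ when $n\ge 3$, and only $X$ when $n=2$. The paper's proof misses this by asserting that $\langle\mathbf 0\rangle$ is a cyclic flat for every $1\le k\le n$. You should therefore state item 1 under $k\ge 2$, or with this exception, rather than keep $\langle\mathbf 0\rangle$ ``as stated''.
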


\begin{proof}
Clearly, we have that $\{\mathbf{0}\}$ is a cyclic flat for any $ 1 \leq k \leq n$. Consider the case $1\le k<n$ and let $D\in \mathcal D_k$. Then $\dimq D=k$ and $\rho(D)=k-1$. Every hyperplane $H$ of $D$ is not in $\mathcal{D}_k$  and $\rho(H)=k-1$.
Moreover, for every $(k+1)$-dimensional vector space $X>D$, we have that $\rho(X)=k>\rho(D)$. Thus $D$ is both cyclic and  flat, so $D$ is a cyclic flat.

If $D$ is a $k$-subspace not belonging to $\mathcal D_k$, then $\rho(D)=k$, and every subspace $X > D$ has rank $k$ as well, so $D$ is not a flat. 

Next, let $D \in \mathscr{L}(\F_q^n)$ with $1\le \dimq D <k$. Then $\rho(D)=\dimq D$.  For any hyperplane $H$ of $D$, we have that $\rho(H)=\dimq D -1 $,  so $D$ is not cyclic. 
Similarly, if $k<\dimq D<n$, then $\rho(D)=k$ and every subspace $X>D$ still has rank $k$, and hence $D$ is not a flat. Therefore, among the proper nonzero subspaces, the only cyclic flats are precisely the members of $\mathcal D_k$.

Since $\F_{q}^n$ is flat, it remains to determine when $\F_q^n$ is cyclic. Since $\rho(\F_q^n)=k$, this happens exactly when every hyperplane $H<\F_q^n$ satisfies $\rho(H)=k$.

If $n\ge k+2$, then every hyperplane has dimension $n-1\ge k+1$, so indeed $\rho(H)=k$ for every hyperplane $H$, and therefore $\F_q^n$ is cyclic.

If $n=k+1$, then the hyperplanes of $\F_q^n$ are exactly its $k$-subspaces. Now, a hyperplane $H$ has rank $k-1$ precisely when $H\in\mathcal D_k$. Hence $\F_q^n$ is cyclic if and only if no hyperplane belongs to $\mathcal D_k$, that is, if and only if $\mathcal D_k=\emptyset$.

Finally, suppose $k=n$. Then, it is straightforward to see that $\mathcal D_k=\emptyset$ or $\mathcal D_k=\{\F_q^n\}$. If $\mathcal D_k=\emptyset$, then $\rho(U)=\dimq U$ for every $U \in \mathscr{L}(\F_{q}^n)$, so the only cyclic flat is $\{\mathbf{0}\}$. If $\mathcal D_k=\{\F_q^n\}$, then $\rho(\F_q^n)=n-1$, while every hyperplane $H$ of $\F_q^n$ has rank $n-1$, so $\F_q^n$ is cyclic and  flat. Thus in this case the cyclic flats are exactly $\{\mathbf{0}\}$ and $\F_q^n$.
\end{proof}

\begin{proposition}\label{prop:paving-matroid}
Let $n\ge 2$ and $1\le k< n$. Let $\mathcal M=(\F_q^n,\rho)$ be a $q$-matroid of rank $k$, and define
\[
\mathcal D_k:=\{D\le \F_q^n : \dim_{\F_q}D=k \text{ and } \rho(D)=k-1\}.
\]
Assume that the cyclic flats of $\mathcal M$ are exactly those listed in Proposition~\ref{paving-matroid}. Then, $\mathcal{M}=\mathcal {M}_{n,q}(\mathcal D_k)$.
\end{proposition}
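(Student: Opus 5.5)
We will use the cyclic-flat cryptomorphism of Alfarano and Byrne: a $q$-matroid is determined by its cyclic flats together with their ranks. It therefore suffices to show that $\mathcal M$ and $\mathcal M_{n,q}(\mathcal D_k)$ have the same cyclic flats with the same ranks. Before this makes sense we must check that $\mathcal M_{n,q}(\mathcal D_k)$ is a $q$-matroid, i.e.\ that $\mathcal D_k$ satisfies \eqref{def:paving}. For distinct $U,V\in\mathcal D_k$ with $\dimq(U\cap V)=k-1$, we have $\dimq(U+V)=k+1$. Submodularity in $\mathcal M$ gives $\rho(U+V)\le 2(k-1)-\rho(U\cap V)$, and $\rho(U\cap V)\ge k-2$ by monotonicity. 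Hence $\rho(U+V)\le k-1$ when $\rho(U\cap V)=k-1$, and $\rho(U+V)\le k$ in general. I would then argue via flats: the closure of $U$ has rank $k-1$ and is a flat containing the circuits of $U$. Alternatively, Lemma~\ref{alfbyrne-lemma228}(i) reduces independence questions to the listed cyclic flats. This lets us show that every $(k-1)$-dimensional subspace is independent, so $\rho(U\cap V)=k-1$ and $\rho(U+V)=k-1$. Then the closure of $U+V$ is a rank-$(k-1)$ flat of dimension at least $k+1$, and its cyclic part (the sum of the circuits it contains) would be a cyclic flat of dimension at least $k$ that is not in $\mathcal D_k$. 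Such cyclic flats are excluded by hypothesis, unless it equals $\F_q^n$, which has rank $k$, a contradiction.

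The independence statement I need is the following. Using Lemma~\ref{alfbyrne-lemma228}(i) with the listed cyclic flats $\{\mathbf 0\}$, $\mathcal D_k$ and possibly $\F_q^n$, a subspace $I$ is independent iff $\dimq(I\cap D)\le k-1$ for all $D\in\mathcal D_k$ (dropped if $D$ has the "wrong" rank, but $\rho(D)=k-1$ by definition) and $\dimq I\le k$ when $\F_q^n$ is a cyclic flat (with $\rho=k$). Here I need that every cyclic flat $D$ in the assumed list actually has rank $k-1$ if it is a $k$-space. This holds by the definition of $\mathcal D_k$ as a set defined through $\rho$, and $\rho(\langle\mathbf 0\rangle)=0$, $\rho(\F_q^n)=k$. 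Consequently every subspace of dimension at most $k-1$ is independent, and this gives $\rho(U\cap V)=k-1$ in the argument above.

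Next, I would compute $\rho$ directly, which gives equality with the paving rank function without invoking the cryptomorphism. For $\dimq U\le k-1$, $U$ is independent, so $\rho(U)=\dimq U$. For $\dimq U=k$, $\rho(U)=k$ iff $U$ is independent iff $U\notin\mathcal D_k$ by the criterion; so $\rho(U)=k-1$ exactly on $\mathcal D_k$. For $\dimq U\ge k+1$, I will show $\rho(U)=k$. Pick a $(k+1)$-subspace $X\le U$. By \eqref{def:paving}, $X$ contains at most one member of $\mathcal D_k$ (two would meet in dimension $\ge k-1$). Since $X$ has more than one hyperplane, it contains a $k$-subspace not in $\mathcal D_k$. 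That subspace is independent, so $\rho(U)\ge k$, and $\rho(U)\le\rho(\mathcal M)=k$.

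The main obstacle is the first step, verifying \eqref{def:paving} from the cyclic-flat hypothesis. The cleanest route I would try first avoids closures. Suppose $U,V\in\mathcal D_k$ are distinct with $\dimq(U\cap V)=k-1$. Then, via Lemma~\ref{alfbyrne-lemma228}(ii), $U$ and $V$ are circuits, since each is a $k$-space inside the cyclic flat itself with $\dimq=\rho+1$, and minimal because all smaller spaces are independent. The $q$-circuit elimination axiom then yields a circuit in $U+V$ avoiding a chosen $1$-space of $U\cap V$, of dimension at most $k$. By (ii), that circuit has dimension exactly $k$ and lies in some $D\in\mathcal D_k$, hence equals $D$. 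Iterating over different $1$-spaces of $U\cap V$ produces many members of $\mathcal D_k$ in the $(k+1)$-space $U+V$. Together with the rank bound $\rho(U+V)\le k-1$ forced by submodularity, this contradicts the rank computation above, giving $\rho(U+V)=k$ versus at most $k-1$.
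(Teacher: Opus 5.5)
Your rank computation for $\dimq U\le k$ is correct, and it is neater than the paper's. Applying Lemma~\ref{alfbyrne-lemma228}(i) to the listed cyclic flats (of ranks $0$, $k-1$, $k$) shows at once that every subspace of dimension at most $k-1$ is independent, and that a $k$-space is independent exactly when it is not in $\mathcal D_k$. The paper instead argues case by case with part (ii): it extracts a circuit and locates it in a cyclic flat.

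The gap is in the case $\dimq U\ge k+1$, which you reduce to \eqref{def:paving}. Your preferred proof of \eqref{def:paving}, the circuit-elimination argument in your last paragraph, is circular. Circuit elimination can only produce further members of $\mathcal D_k$ inside $U+V$. (In a $q$-matroid it gives a circuit inside $(U+V)\cap X$ for a hyperplane $X$ of $\F_q^n$, not one ``avoiding a $1$-space''.) To contradict $\rho(U+V)\le k-1$ you need either a $k$-subspace of $U+V$ outside $\mathcal D_k$, or the flatness of $U$. You use neither: the ``rank computation above'' that you invoke to get $\rho(U+V)=k$ was itself derived from \eqref{def:paving}.

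Your first-paragraph closure argument can be completed, but it needs two additions. First, note that $U,V$ are open, being cyclic flats. Then the cyclic core of $\mathrm{cl}(U+V)$ contains $U+V$ and so has dimension at least $k+1$; "at least $k$" is not enough, since a $k$-dimensional space of rank $k-1$ lies in $\mathcal D_k$ by definition. Second, you must cite that the cyclic core of a flat is a cyclic flat.

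None of this is needed, because the hypothesis already says every $D\in\mathcal D_k$ is a flat. Hence $\rho(W)>k-1$, i.e. $\rho(W)=k$, for every $W>D$. So if $\dimq U\ge k+1$, take any $k$-subspace $D<U$. Either $D\notin\mathcal D_k$, in which case $D$ is independent by your part-(i) computation, or $D\in\mathcal D_k$ and flatness applies. In both cases $\rho(U)=k$. This is exactly the paper's Case~1, and the substance of its Case~4.

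Then \eqref{def:paving} comes for free, since $\rho$ is a rank function and, as the paper observes, submodularity forces \eqref{def:paving}. Alternatively, in your own setup, $\rho(U+V)\le k-1=\rho(U)$ with $U<U+V$ contradicts the flatness of $U$ in one line.

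Finally, once \eqref{def:paving} is known, the cryptomorphism you mention at the start already finishes the proof. By Proposition~\ref{paving-matroid}, $\mathcal M$ and $\mathcal M_{n,q}(\mathcal D_k)$ have the same cyclic flats with the same ranks $0,k-1,k$. This is a shorter alternative to the direct computation, though it relies on the heavier Alfarano--Byrne theorem.
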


\begin{proof} In the case where $\mathcal{D}_k= \emptyset$, the only cyclic flats of $\mathcal{M}$ are the null-space and $\F_{q}^n$, then $\mathcal{M}=\mathcal{U}_{k,n}(q)$ as proved in \cite[Proposition 2.30]{alfbyrne}. Now suppose $\mathcal{D}_k \neq \emptyset$, we shall show that
\[
\rho(U)=
\begin{cases}
\dimq U & \text{if }\dimq U<k,\\
k-1 & \text{if }\dimq U=k \text{ and } U\in\mathcal D_k,\\
k & \text{otherwise.}
\end{cases}
\]

\noindent \textit{Case 1:} Let $U\in\mathcal D_k$ and consider $V<U$.
If $V$ is a hyperplane of $U$, then, since $U$ is cyclic, $\rho(V)=\rho(U)=k-1.$
Hence, by Lemma~\ref{lesskkk}, every proper subspace $T< U$ has rank equal to its dimension. 
Now let $W>U$. Since $U$ is a flat, then every subspace $W$ such that $W > U$ has rank strictly larger than $\rho(U)$, i.e. $\rho(W)>\rho(U)=k-1$.
As $\mathcal M$ has rank $k$, then
$\rho(W)=k$.

Therefore, if $U\in\mathcal D_k$, then every proper subspace of $U$ is independent and every subspace properly containing $U$ has rank $k$.

\noindent
\textit{Case 2:} Let $U\notin\mathcal D_k$ with $\dimq U=k$.
We claim that $\rho(U)=k$. Indeed, assume by contradiction that $\rho(U)<k$. Since $U\notin\mathcal D_k$, we must have that $\rho(U)\le k-2$.
Thus $U$ is a dependent not minimal space, and hence it contains a proper circuit $C<U$ (i.e, such that $\dimq C\le k-1$).
By Lemma~\ref{alfbyrne-lemma228}, there exists a cyclic flat $X$ such that $\{\mathbf{0}\}<C\le X<\F_q^n$
and $\dimq C=\rho(X)+1$.
Hence
\[
\rho(X)=\dimq C-1\le k-2.
\]

Now, by the description of the cyclic flats in Proposition~\ref{paving-matroid}, every proper nonzero cyclic flat is an element of $\mathcal D_k$, and therefore has rank $k-1$. This is a contradiction. Hence $\rho(U)=k$.

\noindent
\textit{Case 3:} Let $U \in \mathscr{L}(\F_q^n)$ with $\dimq U<k$.
We show that $\rho(U)=\dimq U$.

Certainly, $\rho(U)\le \dimq U$. Suppose by contradiction that $\rho(U)<\dimq U$. Then again $U$ is dependent. Therefore, it contains a circuit $C$, and since $\dimq U<k$, we have $\dimq C\le \dimq U<k$.

Again by Lemma~\ref{alfbyrne-lemma228}, there exists a cyclic flat $X$ such that $\{\mathbf{0}\}<C\le X<\F_q^n$
and $\dimq C=\rho(X)+1$.
Thus
\[
\rho(X)=\dimq C-1<k-1.
\]
However, every proper nonzero cyclic flat belongs to $\mathcal D_k$, hence has rank $k-1$. This is impossible. Therefore $U$ is independent, and so
$\rho(U)=\dimq U.$

\noindent
\textit{Case 4:} Let $U \in \mathscr{L}(\F_q^n)$ with $\dimq U>k$.
We show that $\rho(U)=k$.
Since $\mathcal M$ has rank $k$, we have $\rho(U)\le k$. Suppose by contradiction that $\rho(U)\le k-1$. Then $U$ is dependent, so it contains a circuit $C\le U$. By Lemma~\ref{alfbyrne-lemma228}, there exists a cyclic flat $X$ such that $C\le X$
 and $\dimq C=\rho(X)+1.$
By Proposition~\ref{paving-matroid}, the cyclic flat $X$ is either an element of $\mathcal D_k$ or $X=\F_q^n$.

If $X\in\mathcal D_k$, then $\rho(X)=k-1$, and therefore
\[
\dimq C=\rho(X)+1=k.
\]
Since $C$ is a circuit, it is minimal dependent and hence we have $\rho(C)=\dimq C-1=k-1.$
Thus $C\in\mathcal D_k$. As $\dimq U>k=\dimq C$, we have $C<U$. Since $C$ is a flat, it follows that
\[
\rho(U)>\rho(C)=k-1,
\]
hence $\rho(U)=k$, a contradiction.

If $X=\F_q^n$, then $\rho(X)=k$, so $\dimq C=\rho(X)+1=k+1$.
Again, since $C$ is a circuit, $\rho(C)=\dimq C-1=k.$
By monotonicity,
\[
k=\rho(C)\le \rho(U),
\]
contradicting the assumption that $\rho(U)\le k-1$. Therefore $\rho(U)=k$.

\noindent Combining \textit{Cases~1--4}, we get the result and $\mathcal M=\mathcal M_{n,q}(\mathcal D_k)$.
\end{proof}

Although in \cite{gj22} the authors provided some examples of $q$-matroids belonging to the class $\mathcal{M}_{n,q}(\mathcal{D}_k)$ that are not representable, the problem of determining whether in general such $q$-matroids are non-representable, is left open.

In what follows, we show that the $q$-matroid associated with an $\F_{q^m}$-linear near-MRD code belongs to the class of $q$-matroids described above. Hence, for suitable choices of the parameters $n$, $q$ and $k$, near-MRD codes provide examples of paving $q$-matroids which are $\F_{q^m}$-representable.

\begin{theorem}\label{th:near-matroid}
    Let $\mathcal{C}$ be an $[n,k]_{q^m/q}$ near-MRD code with generator matrix $\textbf{G} \in \F_{q^m}^{k \times n}$ and consider the $q$-matroid  $\mathcal{M}_{\textbf{G}}=(\F_{q}^n,\rho_\mathbf{G})$ represented by $\mathbf{G}$. Then, 
\begin{align}\label{near-rank-function}
\rho_\mathbf{G}(U) =
\left\{
\begin{aligned}
  &k &&\text{if}~\dimq U \geq k+1, \\
  &\dimq (U) &&\text{if}~\dimq U \leq k-1 \\
\end{aligned}
\right.
\end{align}
and $\rho_\mathbf{G}(U)\in\{k,k-1\}$ if $\dimq(U)=k$.
\end{theorem}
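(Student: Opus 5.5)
By Proposition~\ref{prop:equiv}, it suffices to compute the rank function of $\mathcal{M}_{\mathcal S}$ for a $q$-system $\mathcal S\in\Phi([\cC])$. The isomorphism $\psi_{\mathbf G}$ there identifies each $U\le\F_q^n$ with an $\F_q$-subspace $W=\psi_{\mathbf G}(U)\le\mathcal S$ of the same $\F_q$-dimension, and $\rho_{\mathbf G}(U)=\dimqm\langle W\rangle_{\F_{q^m}}$. I will also use the characterization recalled at the start of this section: $\mathcal S$ is $(k-2)$-scattered and $(k-1,k)$-evasive. I then split into three cases according to $\dimq W$ compared with $k$.

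\emph{Case $\dimq W\le k-1$.} Put $w=\dimq W$ and $r=\dimqm\langle W\rangle_{\F_{q^m}}$, so $r\le w$. Suppose $r<w$. Then $r\le k-2$, and we can choose an $\F_{q^m}$-subspace $\Pi$ of dimension exactly $k-2$ containing $\langle W\rangle_{\F_{q^m}}$. This gives $\dimq(\mathcal S\cap\Pi)\ge w>r$. It remains to check that this exceeds what $(k-2)$-scatteredness allows. A $(k-2)$-scattered system is $(j,j)$-evasive for every $j\le k-2$, by the remark after the definition of evasiveness. Applying this with $j=r$ to the subspace $\langle W\rangle_{\F_{q^m}}$ itself gives $\dimq(\mathcal S\cap\langle W\rangle_{\F_{q^m}})\le r$. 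But this intersection contains $W$, which has dimension $w>r$, a contradiction. Hence $r=w$. Strictly, $(j,j)$-evasive concerns subspaces of dimension exactly $j$, which is why I apply it to $\langle W\rangle_{\F_{q^m}}$ of dimension $r$ directly. The case $r=0$ is trivial, since $W=\{\mathbf 0\}$ then.

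\emph{Case $\dimq W\ge k+1$.} Here I argue by $(k-1,k)$-evasiveness. If $\langle W\rangle_{\F_{q^m}}$ were proper, it would lie in some hyperplane $H$. Then $\dimq(\mathcal S\cap H)\ge k+1$, contradicting $(k-1,k)$-evasiveness. So $\rho=k$.

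\emph{Case $\dimq W=k$.} A $(k-1)$-dimensional $\F_q$-subspace $W'\le W$ has $\F_{q^m}$-rank $k-1$ by the first case. By monotonicity, $\rho(W)\in\{k-1,k\}$. The value $k-1$ actually occurs for the subspaces $\mathcal S\cap H$ with $\wt_{\mathcal S}(H)=k$, via Lemma~\ref{lem:hyp-cyc}.

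The only delicate point is the first case, namely getting the evasiveness bound for $\F_{q^m}$-subspaces of dimension smaller than $k-2$. This is handled by the remark that $(\ell,\ell)$-evasive implies $(\ell-1,\ell-1)$-evasive, iterated down to $r$. Alternatively, one can pad $\langle W\rangle_{\F_{q^m}}$ by adjoining $k-2-r$ vectors of $\mathcal S$, chosen $\F_{q^m}$-independently, which is possible since $\mathcal S$ spans $\F_{q^m}^k$. This raises the $\F_{q^m}$-dimension and the $\F_q$-weight by at least one each time, giving a $(k-2)$-dimensional subspace of weight greater than $k-2$.
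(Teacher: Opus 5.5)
Your proof is correct, but it takes a different route from the paper's. (Like the paper, it implicitly uses that $\cC$ is nondegenerate, which $d_k(\cC)=n$ forces.)

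\textbf{The paper's route.} The paper stays on the code side.
\begin{itemize}
\item For $\dimq U\ge k+1$ it invokes an external result on the $q$-polymatroid of a code.
\item For $\dimq U=k-1$ it applies Gabidulin's criterion (Theorem~\ref{gabidulin}) to $\cC^\perp$, with $\mathbf G$ as parity-check matrix. Here $d(\cC^\perp)=k$ because $d(\cC)+d(\cC^\perp)=n$.
\item Smaller dimensions then follow from Lemma~\ref{lesskkk}, and dimension $k$ from monotonicity.
\item Both values in dimension $k$ are exhibited explicitly. Gabidulin's criterion gives a $k$-space of rank $k-1$, and a nonsingular $k\times k$ submatrix of $\mathbf G$ gives one of rank $k$.
\end{itemize}

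\textbf{Your route.} You pass to $\mathcal M_{\mathcal S}$ via Proposition~\ref{prop:equiv} and read everything off the two evasiveness conditions.
\begin{itemize}
\item $(k-1,k)$-evasiveness, which encodes $d_1=n-k$, gives the range $\dimq U\ge k+1$.
\item $(k-2)$-scatteredness, which encodes $d_2=n-k+2$, propagated down to $(r,r)$-evasiveness, gives the range $\dimq U\le k-1$ directly in every dimension.
\end{itemize}

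\textbf{Comparison.} Your argument avoids Gabidulin's criterion, the duality $d(\cC)+d(\cC^\perp)=n$, the external polymatroid result and Lemma~\ref{lesskkk}. It is self-contained in the paper's geometric language and shows which generalized weight controls which part of the rank function. The paper's argument ties the result to the classical dual-distance characterization of near-MRD codes, and shows that both values $k-1$ and $k$ occur. You only exhibit $k-1$, which is all the statement requires. The value $k$ is attained by the $\F_q$-span of any $k$ vectors of $\mathcal S$ forming an $\F_{q^m}$-basis of $\F_{q^m}^k$.

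\textbf{A small clean-up.} In your first case, the auxiliary $(k-2)$-dimensional subspace $\Pi$ is a dead end. When $w\le k-2$, the bound $\dimq(\mathcal S\cap\Pi)\ge w$ contradicts nothing. What actually closes the case is applying $(r,r)$-evasiveness to $\langle W\rangle_{\F_{q^m}}$ itself, or your padding argument. You can drop $\Pi$ entirely.
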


 \begin{proof}
Let $\textbf{G}= (\textbf{g}_1\,|\,  \textbf{g}_2|\,  \ldots\,|\, \textbf{g}_n )$
be the generator matrix of $\mathcal{C}$, where 

$$\mathbf{g}_i=\begin{pmatrix} g_{1i} \\
g_{2i} \\
\vdots \\
g_{ki}
\end{pmatrix} \in \mathbb{F}_{q^m}^{k \times 1}$$ denotes the $i$-th column of $\mathbf{G}$, $i=1,2,\ldots,n$.
 
Also, let $U$ be a subspace of $\mathbb{F}_q^n$. By \cite[Lemma 4.2]{gj22}, the rank function $\rho_\mathbf{G}(U)=\rk_{\F_{q^m}}(\textbf{G}A^U)$, where $A^U$ is a matrix belonging to $\F_q^{n \times \dimq(U)}$ whose columns span $U$. By  \cite[Proposition 6.2]{gjhr20}, we obtain that $\rho_\mathbf{G}(U)=k$ for any subspace $U$ of $\F_q^n$ with  $\dimq (U)\geq k+1$. 

Let us consider $\cC^\perp$, the dual of $\cC$, which is an $[n,n-k]_{q^m/q}$ rank-metric code with minimum distance $k$. Since $\mathbf{G}$ is a parity-check matrix of $\mathcal{C}^{\perp}$ and applying Theorem \ref{gabidulin} to $\cC^\perp$, we have that
$\rho_\mathbf{G}(U)=\rk_{\F_{q^m}}(\textbf{G}A^U)=k-1$ for any $U \leq \F_q^n$ with $\dimq(U)=k-1$. 
Moreover, there exists a $k$-dimensional subspace $\widetilde{U} \leq \F_q^n$ such that $\rho_\mathbf{G}(\widetilde{U})=\rk_{\F_{q^m}}(\textbf{G}A^ {\widetilde{U}})=k-1$. Therefore, based on the monotonicity of the rank function, we can conclude that $\rho_\mathbf{G}(U)\in \{k,k-1\}$ whenever $\dimq(U)=k$. 

Finally, it follows from Lemma  \ref{lesskkk}  that $\rho_\mathbf{G}(U)=\dimq U$ for any subspace of $\F_q^n$ with $\dimq (U)\leq k-1$. 

Now, since $\rk_{\F_{q^m}}(\textbf{G})=k$, there is a $k\times k$ sub-matrix of $\textbf{G},$ say $$\textbf{G}'=(\textbf{g}_{i_1}|\textbf{g}_{i_2}|\ldots|\textbf{g}_{i_k}) \,\, \text{ where } 1\leq i_1<i_2<\ldots<i_k\leq n$$  such that $\rk_{\F_{q^m}}(\textbf{G}')=k$. 
Let $A=(a_{ij})$ be an $n\times k$ matrix over $\mathbb{F}_q$ whose entries are defined in the following way:
\[
a_{i_jj}=
\begin{cases}
1 & \text{if } j=1,2,\ldots,k,\\[4pt]
0 & \text{otherwise,}
\end{cases}
\]
and let $U$=colspan$(A)$. Since ${\rk}_{\mathbb{F}_{q}}(A)=k$ the subspace $U \leq \mathbb{F}_{q}^n$ has dimension $k$.
It is straightforward to see that $\rho_\mathbf{G}(U)={\rk}_{\mathbb{F}_{q^m}}(\textbf{G}A)={\rk}_{\mathbb{F}_{q^m}}(\textbf{G}')=k$.

This concludes the proof. 
\end{proof}

With the same notation as in the statement of the previous result, let $\cC$ be an $[n,k]_{q^m/q}$ near-MRD code, $\mathbf{G}$ be a generator matrix of $\cC$ and $\mathcal{M}_\mathbf{G}=(\F_{q}^n, \rho_{\mathbf{G}})$ be the $q$-matroid represented by $\mathbf{G}$. Also, consider the set 
\[\mathcal{D}_k(\cC)=\{U \in \mathscr{L}(\F_q^n) \colon \dimq U=k \textnormal{ and } \rho_{\mathbf{G}}(U)=k-1\}\] which, as we have seen, is not empty.
 Then, it is straightforward to see that for any distinct $U,V \in \mathcal{D}_k(\cC)$, we have $\dimq (U \cap V) \leq k-2$, and the rank function in \eqref{near-rank-function} can be re-written as 
\begin{equation}\label{eq:rank-near}
    \rho_\mathbf{G}(U)=
    \begin{cases}
        k-1 & \text{ if } U \in \mathcal{D}_k(\cC)\\
        \min\{\dimq U , k\} & \text{ if } U \not \in \mathcal{D}_k(\cC)
    \end{cases}.
\end{equation}
Hence, $\mathcal{M}_{\mathbf{G}}=\mathcal{M}_{n,q}(\mathcal{D}_k)$ where $\mathcal{D}_k:=\mathcal{D}_k(\cC)$.

Let $1 \le k < n$, let $\cC$ be an $[n,k]_{q^m/q}$ near-MRD code, and let $\mS \in \Phi([\cC])$. By Proposition \ref{prop:equiv}, we have $\mathcal{M}_{n,q}(\mathcal{D}_k) \cong \mathcal{M}_{\mathbf G} \cong \mathcal{M}_{\mathcal S}$. Since equivalences preserve cyclic flats, it follows that the set of cyclic flats of $\mathcal{M}_{\mathbf G}$ are in bijection with the set of cyclic flats of $\mathcal{M}_{\mathcal S}$. Then, we have that

\
    \[
    \mathscr{Z}(\mathcal{M}_{\mS})=
    \{\mathbf{0}\}\cup \mathcal D_k(\mS) \cup
    \begin{cases}
    \{\mS\}, & \text{if } n\ge k+2,\\
    \emptyset, & \text{if } n=k+1 \text{ and } \mathcal D_k(\mS)\neq \emptyset.
    \end{cases}
    \]
where 
$$\mathcal{D}_k(\mS)=\{Z \leq \mS \colon \dimq Z= k \text{ and } \dimqm \langle Z \rangle_{\F_{q^m}} = k-1\}.$$

\begin{remark}
Let $\cC$ be an $[n,k]_{q^m/q}$  near-MRD code and $\mS \in \Phi([\cC])$. Clearly, since $\mathcal{M}_{\mS}\cong \mathcal{M}_{n,q}(\mathcal{D}_k(\mS))$, any two distinct elements $X,Y \in \mathcal{D}_k(\mS)$ satisfy that $\dimq(X\cap Y)\le k-2$. This property can be obtained also by the evasiveness properties of the $q$-system $\mS$.
Indeed, if $X,Y \in \mathcal{D}_k(\mathcal S)$ are distinct, then by Lemma \ref{lem:hyp-cyc} the subspaces $H_1=\langle X\rangle_{\F_{q^m}}$ and $H_2=\langle Y\rangle_{\F_{q^m}}$
are distinct hyperplanes of $\F_{q^m}^k$. Since $\mathcal S$ is $(k-2)$-scattered, we obtain
\[
\dimq(X\cap Y)\le \dimq\bigl(\mathcal S\cap (H_1\cap H_2)\bigr)\le k-2.
\]
\end{remark}
\medskip
Let $\cC$ be an $[n,k]_{q^m/q}$ near-MRD code, and let $\mS \in \Phi([\cC])$. From now on, the $q$-matroid associated with $\cC$, equivalently $\mathcal{M}_{\mS}$, will be called a \textit{near-uniform} $q$-matroid and will be denoted by the symbol $\mathcal{N}_{k,n,q}(\mathcal{C})$. We conclude this section proving an upper bound on the size of $\mathcal{D}_k(\mS)$.

\begin{theorem}\label{thm:bound-Dk-near-MRD}
Let $\cC$ be an $[n,k]_{q^m/q}$ near-MRD code and let $\mS \in \Phi([\cC])$. Then,

\[
|\mathcal D_k(\mS)|
\le
\frac{\qbinom{n}{k-1}}{\qbinom{k}{1}}.
\]
\end{theorem}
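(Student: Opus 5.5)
A double-counting argument on $(k-1)$-dimensional $\F_q$-subspaces of $\mS$ should give the bound directly. The key fact is the intersection property (\ref{def:paving}), which holds for $\mathcal D_k(\mS)$ by the Remark above (or via $\mathcal M_{\mS}\cong\mathcal M_{n,q}(\mathcal D_k)$): any two distinct $X,Y\in\mathcal D_k(\mS)$ satisfy $\dimq(X\cap Y)\le k-2$.

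Consider the set of pairs
\[
\mathcal P=\{(W,Z)\colon Z\in\mathcal D_k(\mS),\ W\le Z,\ \dimq W=k-1\}.
\]
First, each $Z\in\mathcal D_k(\mS)$ is a $k$-dimensional $\F_q$-space. It therefore has exactly $\qbinom{k}{k-1}=\qbinom{k}{1}$ hyperplanes, so $|\mathcal P|=|\mathcal D_k(\mS)|\cdot\qbinom{k}{1}$.

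Second, fix a $(k-1)$-dimensional $W\le\mS$. If $W$ lay in two distinct $Z,Z'\in\mathcal D_k(\mS)$, then $\dimq(Z\cap Z')\ge k-1$, contradicting the intersection property. Hence each $W$ appears in at most one pair. The number of choices of $W$ is at most $\qbinom{n}{k-1}$, since $\dimq\mS=n$. Thus $|\mathcal P|\le\qbinom{n}{k-1}$.

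Combining the two counts gives $|\mathcal D_k(\mS)|\,\qbinom{k}{1}\le\qbinom{n}{k-1}$, which is the claim. This is the $q$-analogue of the classical packing bound for paving matroids: the hyperplanes of distinct members of $\mathcal D_k$ are pairwise distinct.

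There is no real obstacle. The only point needing care is justifying the intersection condition, and I would cite the Remark's derivation from Lemma~\ref{lem:hyp-cyc} and $(k-2)$-scatteredness. A sharper count is also available: the $(k-1)$-subspaces of a given $Z$ are independent, since $\rho$ equals the dimension below rank $k$. This refinement is not needed, however.
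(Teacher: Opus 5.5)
Your argument is correct, but it differs from the paper's proof. The paper double counts pairs $(A,Z)$ with $A\le Z$ of dimension $k-2$, instead of pairs with $W\le Z$ of dimension $k-1$. For a fixed $A$ it proceeds as follows. It sets $T=\langle A\rangle_{\F_{q^m}}$, which has weight $k-2$ by $(k-2)$-scatteredness. It uses Lemma~\ref{lem:hyp-cyc} to turn the members of $\mathcal D_k(\mS)$ through $A$ into weight-$k$ hyperplanes through $T$. It then bounds their number by $(q^{n-k+2}-1)/(q^2-1)$ via Proposition~\ref{general}, that is, via the point-weight identity $\sum_i N_i(q^i-1)=q^{n'}-1$ for the projected linear set in $\PG(1,q^m)$. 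Dividing $\qbinom{n}{k-2}(q^{n-k+2}-1)/(q^2-1)$ by $\qbinom{k}{2}$ gives the same bound as yours.

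Your count uses only the pairwise intersection property. So it is shorter, and it holds verbatim for every paving $q$-matroid $\mathcal M_{n,q}(\mathcal D_k)$, representable or not; as the paper notes, the intersection property there follows from submodularity alone. It is exactly the ``standard packing bound'' argument the paper mentions in the discussion after the theorem.

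The paper's route does give an intermediate local bound on the members through a fixed $(k-2)$-subspace. But that bound is just the partial $2$-spread bound in $\mS/A$ and also follows from your intersection property, so the geometric machinery adds nothing to this estimate.

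Your closing remark about a ``sharper count'' coming from the independence of the $(k-1)$-subspaces is not substantiated and does not improve the bound. Since you don't use it, simply drop it.
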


\begin{proof}
Fix a $(k-2)$-dimensional $\mathbb F_q$-subspace $A\le \mS$, and set $T:=\langle A\rangle_{\mathbb F_{q^m}}$. By \eqref{eq:rank-near}, we have that $\dimqm T = \rho_{\mS}(A)=k-2$. Since $\rho_\mS(A)=\dimqm T$ and since $\mS$ is $(k-2)$-scattered, we also have that 
$\wt_\mS(T)=k-2$.
Now let $Z\in\mathcal D_k(\mS)$ and  let $H:=\la Z \ra_{\F_{q^m}} \in \mathcal{G}_{k-1}(\F_{q^m}^k)$. By Lemma~\ref{lem:hyp-cyc}, we have that
\[
\#\bigl\{
Z\in\mathcal D_k(\mS): A\subseteq Z
\bigr\}
=
\#\bigl\{
H\in \mathcal{G}_{k-1}(\mathbb F_{q^m}^k):
T\le H,\;
\dimq(\mS\cap H)=k
\bigr\}.
\]
Applying Proposition~\ref{general} with $\tau=k-2$ and $h=k$, we obtain
\[
\#\bigl\{
Z\in\mathcal D_k(\mS): A\subseteq Z
\bigr\}
\le
\frac{q^{\,n-k+2}-1}{q^2-1}.
\]

To derive the result, consider the set
\[
\mathcal I
:=
\bigl\{
(A,Z):
A\le \mS,\;
\dimq(A)=k-2,\;
Z\in\mathcal D_k(\mS),\;
A\subseteq Z
\bigr\}.
\]
Each $Z\in\mathcal D_k(\mS)$ contains exactly $\qbinom{k}{2}$
$(k-2)$-dimensional $\mathbb F_q$-subspaces. Therefore,
\[
|\mathcal I|
=
|\mathcal D_k(\mS)|
\genfrac{[}{]}{0pt}{}{k}{2}_q.
\]
On the other hand, since $\mS$ has $\mathbb F_q$-dimension $n$, the total number of
$(k-2)$-dimensional $\mathbb F_q$-subspaces of $\mS$ is $\qbinom{n}{k-2}$. Hence,
\[
|\mathcal{D}_k(\mS)|\qbinom{k}{2}=|\mathcal I|
\le
\genfrac{[}{]}{0pt}{}{n}{k-2}_q
\,
\frac{q^{\,n-k+2}-1}{q^2-1}
,
\]
and so
\[
|\mathcal D_k(\mS)|
\le
\frac{\genfrac{[}{]}{0pt}{}{n}{k-2}_q}{\genfrac{[}{]}{0pt}{}{k}{2}_q}\,
\frac{q^{\,n-k+2}-1}{q^2-1}
=\frac{\qbinom{n}{k-1}}{\qbinom{k}{1}}.
\]
This concludes the proof.
\end{proof}

Let $\cC$ be an $[n,k]_{q^m/q}$ near-MRD code and let
$\mS\in\Phi([\cC])$. Recall that $\mathcal D_k(\mS)$ is a set of $\F_q$-subspaces in $\mathcal{S}$ of dimension $k$ pairwise intersecting
  in at most a $(k-2)$-dimensional subspace.
Then, after identifying $\mS$ with $\F_q^n$ we may regard
$\mathcal D_k(\mS)$ as a subset of $\mathcal{G}_{k}(\F_q^n)$.
Hence, with respect to the subspace distance
\[
d_{\rm S}(X,Y)
=\dim_{\F_q}X+\dim_{\F_q}Y-2\dim_{\F_q}(X\cap Y),
\]
we obtain that $\mathcal D_k(\mS)$ has subspace distance at least 4 and it is an
$(n,|\mathcal{D}_k(\mathcal{S})|,4;k)_q$ constant-dimension code in the terminology of
subspace coding~\cite{koetterkschischang2008,etzionvardy2011}. In particular,
if $A_q(n,4;k)$ denotes the largest possible size of a constant-dimension
code in $\mathcal{G}_{k}(\F_q^n)$ with minimum subspace distance at least $4$, then
\[
|\mathcal D_k(\mS)|\leq A_q(n,4;k).
\]

The same property can be expressed in the language of $q$-packing designs.
A $P_q(t,k,n)$ \textit{packing design} is a family of $k$-dimensional subspaces of
$\F_q^n$ such that every $t$-dimensional subspace is contained in at most one
member of the family~\cite{braunreichelt2014}. Since two distinct elements of
$\mathcal D_k(\mS)$ cannot contain the same $(k-1)$-dimensional subspace,
$\mathcal D_k(\mS)$ is precisely a $P_q(k-1,k,n)$ packing design.

In this setting, Theorem~\ref{thm:bound-Dk-near-MRD} recover the so-called \textit{standard packing bound}.
Indeed, every member of $\mathcal D_k(\mS)$ contains exactly
$\qbinom{k}{k-1}=\qbinom{k}{1}$ subspaces of dimension $k-1$, and each such
$(k-1)$-subspace can occur in at most one member. 

This bound is also closely related to \textit{Delsarte's code--anticode bound} for the
Grassmann scheme see, e.g., \cite[Theorem~3]{khaleghi2009}. In the range $k\leq n/2$, the anticode bound for minimum
subspace distance $4$ is exactly
\[
A_q(n,4;k)
\leq
\frac{\qbinom{n}{k-1}}{\qbinom{k}{1}},
\]
so it coincides with bound in Theorem~\ref{thm:bound-Dk-near-MRD}. More generally, using the
duality $A_q(n,4;k)=A_q(n,4;n-k)$, the anticode bound can be written in the
symmetric form
\[
A_q(n,4;k)
\leq
\frac{\qbinom{n}{k}}
     {\qbinom{\max\{k,n-k\}+1}{1}}.
\]
Consequently, when $k>n/2$ the dual form of the anticode bound may already be
strictly stronger than the one-sided packing bound obtained by counting
$(k-1)$-subspaces. Furthermore, even when $k\leq n/2$, sharper bounds for
$A_q(n,4;k)$ are known for several parameter sets, and any such bound applies
a fortiori to $\mathcal D_k(\mS)$. For example, $A_2(6,4;3)=77$ by~\cite{honoldkiermaierkurz2015}, whereas the bound in Theorem~\ref{thm:bound-Dk-near-MRD} gives $\qbinom{6}{2}/\qbinom{3}{1}=93$. Hence, for these parameters one has the strictly stronger estimate
$|\mathcal D_3(\mS)|\leq 77$.

\medskip

The following two examples illustrate the upper bound in
Theorem~\ref{thm:bound-Dk-near-MRD}: the first gives a
family attaining the bound, whereas the second shows that the bound
need not be attained. Equality in the  bound of Theorem~\ref{thm:bound-Dk-near-MRD}  has a strong design-theoretic meaning. Indeed, $\mathcal{D}_k(\mathcal{S})$ attains the packing bound if and only if every $(k-1)$-dimensional subspace of $\mS$ is contained
in exactly one member of $\mathcal D_k(\mS)$. Thus, $\mathcal D_k(\mS)$ is a $q$-Steiner system
$S_q(k-1,k,n)$; see, e.g., \cite{braun2016, etzionvardy2011}.
Conversely, every $S_q(k-1,k,n)$ is a constant-dimension code with minimum
subspace distance at least $4$ and attains the packing bound.

\begin{example}\label{exam:1}
Let $2<n<m$
with $n\mid m$, so that $\F_{q^n}\subseteq\F_{q^m}$, and choose
$s$ such that $\gcd(s,n)=2$. Consider
$$\mS=\{(x,x^{q^s}):x\in\F_{q^n}\}\subseteq\F_{q^m}^2$$
and let $\cC\in\Psi([\mS])$.
Then $\dim_{\F_q}(\mS)=n$. Every $1$-dimensional vector subspace of $\F_{q^m}^2$ intersecting the $\F_q$-vector space $\mS$ nontrivially is of the form $H_x=\langle(x,x^{q^s})\rangle_{\F_{q^m}}$, for some
$x\in\F_{q^n}^*$, and $H_x\cap\mS=\{\lambda(x,x^{q^s}):\lambda\in\F_{q^2}\}$. Hence the maximum weight of a $1$-dimensional vector subspace with respect to $\mS$ is $2$, and therefore
$d(\cC)=n-2$.

Moreover, $\mS$ spans $\F_{q^m}^2$ over $\F_{q^m}$. Indeed, if
$a\in\F_{q^n}\setminus\F_{q^2}$, then $a^{q^s}\neq a$, and hence
$(1,1)$ and $(a,a^{q^s})$ are $\F_{q^m}$-linearly independent.
Thus $d_2(\cC)=n$, and $\cC$ is an $[n,2,n-2]_{q^m/q}$ near-MRD code.

Since $H_x=H_y$ if and only if
$\frac{x}{y}\in\F_{q^2}^*$, we have that
\[
|\mathcal{D}_2(\mS)|
=\frac{q^n-1}{q^2-1}
=\frac{\qbinom{n}{1}}{\qbinom{2}{1}},
\]
and so the upper bound in Theorem is attained. Note that the condition $\dim_{\F_q}(X\cap Y)\leq k-2$ shows that
$\mathcal D_2(\mS)$ is a partial $2$-spread of $\F_q^n$.
\end{example}

\begin{example}
Let $\alpha\in\F_{16}\setminus\F_2$ and consider the $[3,2]_{2^4/2}$ code $\cC$
generated by
\[
\mathbf G=
\begin{pmatrix}
1 & \alpha & 0\\
0 & 0 & 1
\end{pmatrix},
\]
and let $\mS\in\Phi([\cC])$.
Then
$\cC=\{(a,a\alpha,b):a,b\in\F_{16}\}$.
Taking $a=0$ and $b\neq0$, the codeword $(0,0,b)$ has rank $1$ and, hence,
$d_1(\cC)=1$.

Moreover, $\mathbf G$ has full $\F_q$-column rank, so $\cC$ is
nondegenerate. Hence, $d_2(\cC)=n=3$.  Therefore
$\cC$ is a
$[3,2,1]_{2^4/2}$ near-MRD code. Now, let $H_0=\langle(1,0)\rangle_{\F_{16}}$. Then
$\mS\cap H_0=\langle(1,0),(\alpha,0)\rangle_{\F_2}$ has
$\F_2$-dimension $2$. For any other 1-dimensional
$\F_{16}$-subspace $H\neq H_0$, we have
$\dim_{\F_2}(\mS\cap H)\leq 1$. Indeed, the four nonzero vectors of
$\mS\setminus H_0$ are
$(0,1),(1,1),(\alpha,1)$ and $(1+\alpha,1)$, and no two of them belong
to the same 1-dimensional $\F_{16}$-subspace. Hence $H_0$ is the
unique 1-dimensional $\F_{16}$-subspace $H$ satisfying
$\dim_{\F_2}(\mS\cap H)=2$. Therefore,
$\mathcal D_2(\mS)=\{\mathcal{S} \cap H_0\}$ and $|\mathcal D_2(\mS)|=1$.
On the other hand, Theorem~\ref{thm:bound-Dk-near-MRD} gives
$|\mathcal D_2(\mS)|\leq \left \lfloor 7/3 \right \rfloor =2$.
Hence the upper bound is not attained.

\end{example}

\section{Determining $|\mathcal{D}_k(\mathcal{S})|$ for optimal near-MRD codes}\label{determining}

In this section, we determine $|\mathcal D_k(\mS)|$ for $q$-systems associated with a near-MRD codes in the case where $n > m$, and hence for optimal ones. Our approach is based on the properties of near-MRD codes and their adjoint codes. For this reason, we introduce here the necessary notation on these latetr that will be used throughout this section.

Let $\cC\subseteq\F_q^{n\times m}$ be a rank-metric code. The \textit{adjoint code} of $\cC$ is defined by
\[
\cC^\top:=\{M^t:M\in\cC\}\subseteq\F_q^{m\times n},
\]
where $M^t$ denotes the transpose of $M$. For $0\leq i\leq \min\{n,m\}$, let $A_i(\cC)$ denote the number of codewords of rank $i$ in $\cC$. Since $\rk(M)=\rk(M^t)$ for every $M\in\cC$, we have
$
A_i(\cC)=A_i(\cC^\top)
$
for every $i$, and consequently
$
d(\cC)=d(\cC^\top).
$

Recall that the Delsarte dual of $\cC$ is
\[
\cC^\perp:=\{M\in\F_q^{n\times m}:\langle M,N\rangle=0,\ \forall\,N\in\cC\},
\]
where $\langle M,N\rangle=\Tr(MN^t)$. Moreover, the adjoint and the Delsarte dual satisfy $(\cC^\top)^\perp=(\cC^\perp)^\top$.

Let $\cC$ be an $[n,k,d]_{q^m/q}$ rank-metric code and let $\Gamma$ be an ordered $\F_q$-basis of $\F_{q^m}$. With the same notation introduced in Section~\ref{subsec:rm}, $\cC_\Gamma:=\{\Gamma(\vv):\vv\in\cC\}\subseteq\F_q^{n\times m}$ is an $[n\times m,mk,d]_q$ rank-metric code. The matrix representation preserves the rank weight of each codeword, so that $A_i(\cC)=A_i(\cC_\Gamma)$ for every $i$. Moreover, since $\rk(M)=\rk(M^t)$ for every $M\in\cC_\Gamma$, we have $A_i(\cC_\Gamma)=A_i(\cC_\Gamma^\top)$. Therefore, for every $i$, we have
\[
A_i(\cC)=A_i(\cC_\Gamma^\top).
\]

\begin{lemma}\label{re:adjoint}
Let $\cC$ be an $[n,k]_{q^m/q}$ near-MRD code and $\Gamma$ be an ordered basis of $\F_{q^m}$ over $\F_q$. Denote by $\Gamma^\ast$ the trace-dual basis of $\Gamma$. Then
\[
d(\cC_\Gamma^\top)+d((\cC_{\Gamma}^\top)^\perp)=n.
\]

\end{lemma}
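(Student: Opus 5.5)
The plan is to reduce the statement to the defining property of near-MRD codes, namely $d(\cC)+d(\cC^\perp)=n$, recalled from~\cite[Proposition~5.3]{mnt23} at the beginning of Section~\ref{sec:near-MRD codes and their $q$-matroids}. The left-hand side involves only rank-distance invariants of matrix codes, so two facts suffice: the adjoint preserves minimum distance, and the Delsarte dual of $\cC_\Gamma$ is the matrix representation of $\cC^\perp$ with respect to the trace-dual basis $\Gamma^\ast$.

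First, as observed just before the statement, $A_i(\cC)=A_i(\cC_\Gamma^\top)$ for every $i$. Hence $d(\cC_\Gamma^\top)=d(\cC_\Gamma)=d(\cC)$.

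Second, we use $(\cC_\Gamma^\top)^\perp=(\cC_\Gamma^\perp)^\top$, also recalled above. Transposition preserves rank, so $d((\cC_\Gamma^\top)^\perp)=d(\cC_\Gamma^\perp)$.

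The key step is then the standard identity
\[
(\cC_\Gamma)^\perp=(\cC^\perp)_{\Gamma^\ast},
\]
which I will verify directly. For $\vv\in\F_{q^m}^n$ write $v_i=\sum_j\Gamma(\vv)_{ij}\gamma_j$, and for $\mathbf{w}$ write $w_i=\sum_l\Gamma^\ast(\mathbf{w})_{il}\gamma_l^\ast$, where $\Tr(\gamma_j\gamma_l^\ast)=\delta_{jl}$ and $\Tr$ denotes the trace from $\F_{q^m}$ to $\F_q$. Then
\[
\Tr(\vv\cdot\mathbf{w})=\sum_{i,j}\Gamma(\vv)_{ij}\Gamma^\ast(\mathbf{w})_{ij}=\Tr\bigl(\Gamma(\vv)\Gamma^\ast(\mathbf{w})^t\bigr).
\]
If $\mathbf{w}\in\cC^\perp$, this vanishes for every $\vv\in\cC$, so $(\cC^\perp)_{\Gamma^\ast}\subseteq(\cC_\Gamma)^\perp$. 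For the reverse inclusion, compare $\F_q$-dimensions: $m(n-k)=nm-mk$ on both sides. Since $\Gamma^\ast$ preserves rank weight, $d((\cC_\Gamma)^\perp)=d(\cC^\perp)$.

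Combining the two steps,
\[
d(\cC_\Gamma^\top)+d((\cC_\Gamma^\top)^\perp)=d(\cC)+d(\cC^\perp)=n,
\]
by~\cite[Proposition~5.3]{mnt23}.

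One point needs care. That characterization is stated for nondegenerate codes. If nondegeneracy is not assumed, I would argue directly from the definition $d_s(\cC)=n-k+s$ together with the Wei-type duality for generalized rank weights to get $d(\cC^\perp)=k$. The main obstacle is therefore the dual-basis identity, specifically making sure the inner product $\langle M,N\rangle=\Tr(MN^t)$ matches the trace pairing under $\Gamma$ and $\Gamma^\ast$. The computation above handles this.
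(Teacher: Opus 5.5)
Your proposal is correct and follows the paper's proof step for step. Both arguments use four facts: transposition preserves rank, $(\cC_\Gamma^\top)^\perp=(\cC_\Gamma^\perp)^\top$, $\cC_\Gamma^\perp=(\cC^\perp)_{\Gamma^\ast}$, and $d(\cC)+d(\cC^\perp)=n$. The only difference is that you verify the trace-dual identity directly (trace pairing plus a dimension count), where the paper cites Ravagnani's Theorem~21. Your nondegeneracy caveat is unnecessary here, because the paper defines generalized weights only for nondegenerate codes via $q$-systems.
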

\begin{proof}
By~\cite[Theorem~21]{ravagnani}, we have that $\cC_\Gamma^\perp=(\cC^\perp)_{\Gamma^\ast}.$
Since the matrix representation preserves rank weights, it follows that
\[
d(\cC_\Gamma)=d(\cC)
\qquad\text{and}\qquad
d(\cC_\Gamma^\perp)
=d((\cC^\perp)_{\Gamma^\ast})
=d(\cC^\perp).
\]
As $\cC$ is near-MRD, $d(\cC)+d(\cC^\perp)=n$ and hence $d(\cC_\Gamma)+d(\cC_\Gamma^\perp)=n.$

Finally, matrix transposition preserves rank and
$(\cC_\Gamma^\top)^\perp
=(\cC_\Gamma^\perp)^\top.$
Therefore
$d(\cC_\Gamma^\top)
=d(\cC_\Gamma)$
and
$
d((\cC_\Gamma^\top)^\perp)
=d(\cC_\Gamma^\perp),$
so that
$
d(\cC_\Gamma^\top)
+d((\cC_\Gamma^\top)^\perp)=n.
$
\end{proof}

The following lemma establishes the relation between the number of minimum-rank codewords of a near-MRD code and $|\mathcal D_k(\mS)|$.

\begin{proposition}
\label{reAsize}
Let $\cC$ be a $[n,k]_{q^m/q}$ near-MRD code and let
$\mS\in\Phi([\cC])$. Then
\[
A_{n-k}(\cC)=(q^m-1)|\mathcal D_k(\mS)|.
\]
\end{proposition}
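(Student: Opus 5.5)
The plan is to use the correspondence $\Phi/\Psi$ between codewords of $\cC$ and $\F_{q^m}$-hyperplanes of $\F_{q^m}^k$, and then Lemma~\ref{lem:hyp-cyc}. Fix a generator matrix $\mathbf G=(u_1|\cdots|u_n)$ with $\mS=\langle u_1,\ldots,u_n\rangle_{\F_q}$. Every codeword has the form $\cc=\xx\mathbf G=(\xx\cdot u_1,\ldots,\xx\cdot u_n)$ with $\xx\in\F_{q^m}^k$. For $\xx\neq\mathbf 0$ we set $H_\xx:=\{\vv\in\F_{q^m}^k:\xx\cdot\vv=0\}$, which is a hyperplane. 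We will also use the standard fact underlying the correspondence recalled in Section~\ref{subsec:rm}: the $\F_q$-linear map $\lambda\in\F_q^n\mapsto \sum_i\lambda_i(\xx\cdot u_i)$ has kernel isomorphic to $\mS\cap H_\xx$, because $\vv\mapsto\vv\mathbf G^t$ is an isomorphism $\F_q^n\to\mS$. Hence
\[
\rk(\xx\mathbf G)=n-\dimq(\mS\cap H_\xx).
\]

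From this identity, a nonzero codeword $\xx\mathbf G$ has rank weight $n-k$ exactly when $\wt_\mS(H_\xx)=k$. The map $\xx\mapsto H_\xx$ from $\F_{q^m}^k\setminus\{\mathbf 0\}$ onto the set of hyperplanes is $(q^m-1)$-to-one, since $H_\xx=H_{\mathbf y}$ if and only if $\mathbf y\in\F_{q^m}^*\xx$. Because $\mathbf G$ has full rank, $\xx\mapsto\xx\mathbf G$ is injective. Combining these, $A_{n-k}(\cC)=(q^m-1)\,|\mathcal H|$, where $\mathcal H$ is the set of hyperplanes $H$ with $\wt_\mS(H)=k$. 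Since $\mS$ is $(k-1,k)$-evasive, $k$ is the maximum weight of a hyperplane, so $n-k$ is exactly the minimum distance. This is consistent with the result, although the count itself does not depend on it.

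Finally, Lemma~\ref{lem:hyp-cyc} gives a bijection between $\mathcal H$ and the $k$-dimensional $\F_q$-subspaces $W\le\mS$ whose $\F_{q^m}$-span is a hyperplane. By definition, this set of subspaces is exactly $\mathcal D_k(\mS)=\{Z\le\mS:\dimq Z=k,\ \dimqm\langle Z\rangle_{\F_{q^m}}=k-1\}$. Therefore $|\mathcal H|=|\mathcal D_k(\mS)|$, which proves the statement.

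The only delicate point is justifying the weight identity $\rk(\xx\mathbf G)=n-\wt_\mS(H_\xx)$. Since the paper recalls this correspondence (``codewords of rank weight $w$ correspond to hyperplanes with $\dimq(H\cap\mS)=n-w$''), I would cite it and add only the multiplicity $q^m-1$ coming from $\F_{q^m}^*$-scalar multiples. The adjoint-code lemma is not needed here.
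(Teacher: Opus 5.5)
Your proposal is correct and follows essentially the same route as the paper: identify a codeword $\mathbf c=\xx\mathbf G$ with the hyperplane $H_\xx$, observe that $\rk(\xx\mathbf G)=n-\dimq(\mS\cap H_\xx)$, count each hyperplane $q^m-1$ times via its $\F_{q^m}^*$-multiples, and conclude with Lemma~\ref{lem:hyp-cyc}. Your explicit derivation of the weight identity and of the injectivity of $\xx\mapsto\xx\mathbf G$ only fills in details that the paper leaves implicit.
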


\begin{proof}
Every nonzero codeword $\mathbf c\in\cC$ determines a hyperplane
$H_{\mathbf c}$ of $\F_{q^m}^{k}$ in the correspondence between
linear codes and $q$-systems. More precisely, if $\mathbf{c}= \mathbf{x} \mathbf{G}$, $\mathbf{x} \in \F_{q^m}^l \setminus \{\mathbf{0}\}$, then $H_{\mathbf{c}}= \mathbf{x}^\perp$.   Moreover, $H_\mathbf{c}=H_{\lambda \mathbf{c}}$ for any $\lambda \in \F_{q^m}^*$ and
\[
\rk(\mathbf c)
=n-\dim_{\F_q}(H_{\mathbf c}\cap\mS).
\]
Therefore,
\[
\rk(\mathbf c)=n-k
\iff
\dim_{\F_q}(H_{\mathbf c}\cap\mS)=k.
\]

Since every hyperplane of $\F_{q^m}^{k}$ is represented by exactly
$q^m-1$ nonzero scalar multiples and by Lemma~\ref{lem:hyp-cyc}, we have
\[
A_{n-k}(\cC)
=(q^m-1)
\cdot \#\{H: H\leq \F_{q^m}^k \text{~and~}\dim_{\F_q}(H\cap\mS)=k\}
=(q^m-1)|\mathcal D_k(\mS)|.
\]
\end{proof}

\begin{theorem}\label{th:sizebound}Let $n> m$. 
     Let $\cC$ be an $[n,k,d]_{q^m/q}$ near-MRD code and let $\mS \in \Phi([\cC])$. Then,
\[
|\mathcal D_k(\mS)|
=\frac{q^{(n-m)d}-1}{q^m-1}
\genfrac{[}{]}{0pt}{}{m}{d}_q
.
\]
\end{theorem}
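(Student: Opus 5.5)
By Proposition~\ref{reAsize}, it is enough to determine $A_{n-k}(\cC)=A_d(\cC)$ and then divide by $q^m-1$. Here $d=n-k$, since $\cC$ is near-MRD. When $n>m$ the code lives in the ``wrong'' orientation for the standard formulas. We therefore transpose: we pass to the adjoint $\cC_\Gamma^\top\subseteq\F_q^{m\times n}$, which satisfies $A_i(\cC)=A_i(\cC_\Gamma^\top)$ for every $i$. Since $m<n$, its matrices are $m\times n$ with $m$ as the smaller side. By Lemma~\ref{re:adjoint}, $\cC_\Gamma^\top$ is an $\F_q$-linear matrix code whose minimum distance and dual minimum distance add up to $n$. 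This is exactly the near-MRD (dually almost-MRD) hypothesis in Delsarte's matrix framework, for matrices of size $m\times n$ with $m\le n$. The plan is to apply \cite[Corollary~28]{dela18}, which gives the full weight distribution of such codes, and to read off the coefficient of the minimum weight $d$.

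Concretely, I would argue as follows. The code $\cC_\Gamma^\top$ has $\F_q$-dimension $mk$, and its dual has minimum distance $n-d=k$. Since $d+d^\perp=n$ and the matrices have $m\le n$ rows, the MacWilliams identities for rank-metric codes determine $A_d$ uniquely from $|\cC|=q^{mk}$ together with the vanishing conditions $A_1=\dots=A_{d-1}=0$ and $A^\perp_1=\dots=A^\perp_{k-1}=0$. The remaining weights $A_{d+1},\dots$ are expressed through $A_d$, and the closing relation fixes $A_d$. I would quote the resulting formula from \cite{dela18} in the orientation with $n$ as the larger side, which gives
\[
A_d(\cC)=\qbinom{m}{d}\bigl(q^{(n-m)d}-1\bigr).
\]
As a sanity check I would verify it on a small case, for instance the optimal code of Example~\ref{ex:nearMRD} with $n=m+1$. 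The formula is also consistent with the fact that a dual pair with $d+d^\perp=n+1$ would be MRD: in that case the analogous bracket would vanish at weight $d$.

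Dividing by $q^m-1$ then gives the claim. The main obstacle is matching the parameters of \cite[Corollary~28]{dela18} correctly. In Delsarte's setting one needs the matrix size with $m\le n$, the right identification of which side plays the role of ``$n$'' in the $q$-binomials, and the dimension $mk$ over $\F_q$ rather than $k$ over $\F_{q^m}$. This is exactly why the transpose step and the hypothesis $n>m$ are used: the corollary is stated for codes with $d+d^\perp$ equal to the larger dimension. I would check this alignment carefully, or, if necessary, rederive $A_d$ directly from the rank MacWilliams identity by solving the two-term system.
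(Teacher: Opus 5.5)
Your proposal is correct and is essentially the paper's proof: Proposition~\ref{reAsize} reduces the count to $A_d(\cC)$, Lemma~\ref{re:adjoint} puts the adjoint code $\cC_\Gamma^\top\subseteq\F_q^{m\times n}$ (with $m<n$) in Delsarte's setting with $d+d^\perp=n$, and \cite[Corollary~28]{dela18} gives $A_d(\cC)=\qbinom{m}{d}\bigl(q^{(n-m)d}-1\bigr)$. One clarification: what $n>m$ actually buys is $d^\perp=k>m-d$, not ``$d+d^\perp$ equal to the larger dimension'', so the single MacWilliams identity at $\nu=m-d$ already gives $\qbinom{m}{d}+A_d=q^{mk-n(m-d)}\qbinom{m}{d}=q^{(n-m)d}\qbinom{m}{d}$ with no multi-term system to solve. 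This works uniformly, including the case $n=m+2$, where the adjoint code is in fact MRD. Finally, your side remark that $d+d^\perp=n+1$ would make a bracket vanish at weight $d$ is not correct and can be dropped: for an MRD code in this orientation $A_d=\qbinom{m}{d}(q^n-1)\neq 0$.
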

\begin{proof}
By \cite[Corollary 28]{dela18} and Proposition \ref{re:adjoint}, we have that \[
A_{d}(\cC)
=
\left(q^{(n-m)d}-1\right)
\genfrac{[}{]}{0pt}{}{m}{d}_q.
\]
Hence, by Proposition \ref{reAsize}, the result follows.
\end{proof}

By \cite[Theorem~27]{dela18}, the rank distribution of a matrix code is determined by its parameters together with the coefficients
\[
A_d(\cC),\ A_{d+1}(\cC),\ldots,A_{n-d^\perp}(\cC),
\]
where $d^\perp$ denotes the minimum distance of its Delsarte dual. In this case $d^\perp=k$, and hence $n-d^\perp=n-k=d$. Thus this list consists of the single coefficient $A_d(\mathcal C)$. Consequently, for $n\le m$, the entire rank distribution of a near-MRD code is determined by $n,m,k,q$ and the number $A_d(\mathcal C)$ of its minimum-rank codewords. Hence, as a consequence pf Proposition \ref{reAsize}, the rank distribution of $\cC$ is completely determined whenever $|\mathcal D_k(\mS)|$ is known as it happens for the construction in Example~\ref{exam:1}.

\section{Conclusions and open questions}\label{sec:conclusions}

In this paper, we studied the $q$-matroids associated with near-MRD codes and introduced the class of near-uniform $q$-matroids. We determined their rank function and characterized their cyclic flats providing an upper bound on their number. When $n > m$, and hence in particular for optimal near-MRD codes, we determined the exact number of nontrivial cyclic flats of the associated near-uniform $q$-matroids.

Several questions remain open. A first one concerns the existence and construction of optimal near-MRD codes. By Proposition~\ref{upperbound-near}, the exceptional case $m=2k-2$ is equivalent to the existence of maximum $(k-2)$-scattered $\mathbb{F}_q$-subspaces of $\F_{q^{2k-2}}^k$. For $k=3$, the existence of maximum scattered subspaces of $\F_{q^4}^3$ has already been established in~\cite{cmpz2017}, which gives optimal $[6,3,3]_{q^4/q}$ near-MRD codes. The case $k=4$ is covered by the construction of maximum $2$-scattered subspaces of $\F_{q^6}^4$ for $q$ an odd power of two in~\cite{bggm2025}, which yields optimal $[8,4,4]_{q^6/q}$ near-MRD codes. To the best of our knowledge, the existence problem for maximum $(k-2)$-scattered subspaces of $\F_{q^{2k-2}}^k$ remains unsolved for $k\geq 5$, suggesting the following problem.\\
Finally, Theorem~\ref{thm:bound-Dk-near-MRD} provides an upper bound on $|\mathcal D_k(\mS)|$ for general parameters. 
Theorem~\ref{th:sizebound} determines $|\mathcal{D}_k(\mS)|$ for $n>m$. For $n\le m$, the upper bound in Theorem~\ref{thm:bound-Dk-near-MRD} can be attained for certain parameters, but its sharpness is not determined in general. 
 The relation between $|\mathcal D_k(\mS)|$ and the number of minimum-rank codewords, it also yields an upper bound on $A_d(\cC)$. Hence, determining sharper bounds for $|\mathcal D_k(\mS)|$ for general parameters $n,m,k$ and $q$, and consequently for $A_d(\cC)$, remains an open problem.

\section{Acknowledgment}
This research has been partially supported by the Italian National Group for Algebraic and Geometric Structures and their Applications (GNSAGA -- INdAM).

\end{document}